\numberwithin{equation}{section}
\tikzstyle{vertex}=[draw=black,circle,fill=black,minimum size=4pt, inner sep=0pt, outer sep=0pt,text=white,line width=0mm]
\tikzstyle{c0}=[shape=circle, minimum size=4pt, fill=white]
\tikzstyle{c1}=[shape=rectangle, minimum size=7pt, fill=red]
\tikzstyle{c2}=[shape=diamond, minimum size=10pt, fill=blue]
\theoremstyle{plain}
\newtheorem{Th}{Theorem}[section]
\newtheorem{Lemma}[Th]{Lemma}
\newtheorem{Cor}[Th]{Corollary}
 \theoremstyle{definition}
\newtheorem{Def}[Th]{Definition}
\newtheorem{Conj}[Th]{Conjecture}
\newtheorem{Rem}[Th]{Remark}
\newtheorem{?}[Th]{Problem}
\begin{document}

\title{Matchings in regular graphs: minimizing the partition function}

\author[M. Borb\'enyi]{M\'arton Borb\'enyi}
\address{ELTE: E\"{o}tv\"{o}s Lor\'{a}nd University\\H-1117 Budapest
\\ P\'{a}zm\'{a}ny P\'{e}ter s\'{e}t\'{a}ny 1/C}
\email{marton.borbenyi@gmail.com}

\author[P. Csikv\'ari]{P\'{e}ter Csikv\'{a}ri}

\address{MTA-ELTE Geometric and Algebraic Combinatorics Research Group \\ P\'{a}zm\'{a}ny P\'{e}ter s\'{e}t\'{a}ny 1/C \\ Hungary \& E\"{o}tv\"{o}s Lor\'{a}nd University \\ Mathematics Institute, Department of Computer 
Science \\ H-1117 Budapest
\\ P\'{a}zm\'{a}ny P\'{e}ter s\'{e}t\'{a}ny 1/C \\ Hungary \& Alfr\'ed R\'enyi Institute of Mathematics \\ H-1053 Budapest \\ Re\'altanoda utca 13-15} 

\email{peter.csikvari@gmail.com}

\thanks{The first author is partially supported by the EFOP program (EFOP-3.6.3-VEKOP-16-2017-00002). The first author was also partially supported be the New National Excellence Program (\'UNKP) when the project started.}

\subjclass[2010]{Primary: 05C30. Secondary: 05C70, 05C31, 05C35}

 \keywords{matchings, matching polynomial, regular graphs} 

\begin{abstract} For a graph $G$ on $v(G)$ vertices let $m_k(G)$ denote the number of matchings of size $k$, and consider the partition function $M_{G}(\lambda)=\sum_{k=0}^nm_k(G)\lambda^k$. In this paper we show that if $G$ is a $d$--regular graph and $0<\lambda<(4d)^{-2}$, then
$$\frac{1}{v(G)}\ln M_G(\lambda)>\frac{1}{v(K_{d+1})}\ln M_{K_{d+1}}(\lambda).$$
The same inequality holds true if $d=3$ and $\lambda<0.3575$. More precise conjectures are also given.
\end{abstract}

\maketitle

\section{Introduction}

In this paper we study matchings of regular graphs. For a graph $G$ on $v(G)$ vertices let $m_k(G)$ denote the number of matchings of size $k$, and consider the function $M_{G}(\lambda)=\sum_{k=0}^nm_k(G)\lambda^k$. 
(Note that $m_0(G)=1$.) In mathematics this is called the matching generating polynomial, and in statistical physics it is called the partition function of the monomer-dimer model. Some authors also call it the matching polynomial of the graph $G$, but we reserve this name for another polynomial introduced later.

Thanks to the influential paper of Friedland, Krop and Markstr\"om \cite{friedland2008number} there has been a recent activity on extremal problems about matchings in regular graphs. General questions look as follows: fix the number of vertices $n$, the degree $d$ and the size $k$ of the matching, which $d$--regular graph maximizes or minimizes $m_k(G)$ among $d$--regular graphs on $n$ vertices? If one wishes to compare graphs of different sizes, then a good candidate for normalization is to consider $\frac{1}{v(G)}\ln M_G(\lambda)$. Then if we consider $k$ disjoint copies of the same graph $G$ -- let us denote it by $kG$-- then 
$$\frac{1}{v(kG)}\ln M_{kG}(\lambda)=\frac{1}{v(G)}\ln M_G(\lambda).$$
Generally, answering this latter question give an asymptotic answer to the former one by choosing $\lambda$ appropriately, and this way making the term $m_k(G)\lambda^k$ dominant in $M_G(\lambda)$.

A general intuition behind the answer for these questions that short even cycles increase $m_k(G)$ or $\frac{1}{v(G)}\ln M_G(\lambda)$, while short odd cycles decrease these quantities. Friedland, Krop and Markstr\"om \cite{friedland2008number} suggested the following conjectures: if $2d\ |\ n$ and $G$ is a $d$--regular graph on $n$ vertices, then
$$m_k(G)\leq m_k\left(\frac{n}{2d}K_{d,d}\right),$$
where $K_{d,d}$ is the complete bipartite graph with parts of size $d$. This conjecture became known as the Upper Matching Conjecture. It has also an asymptotic version, the Asymptotic 
Upper Matching Conjecture asserting that for any $d$--regular graph $G$ we have
$$\frac{1}{v(G)}\ln M_G(\lambda)\leq \frac{1}{v(K_{d,d})}\ln M_{K_{d,d}}(\lambda).$$
(This is not the original formulation, but it is roughly equivalent with it.) The Asymptotic 
Upper Matching Conjecture was proved by Davies, Jenssen, Perkins and Roberts \cite{davies2017independent}. In another paper, Perkins, Jenssen \cite{davies2020proof} also settled the Upper Matching Conjecture in the case when $n$ is large enough. 
These conjectures have also counterparts about minimization  among bipartite regular graphs, the Lower Matching Conjecture and Asymptotic Lower Matching Conjecture. Instead of giving the exact forms of these conjectures, which are a bit technical, we just give the flavour of these conjectures. In bipartite graphs there are no odd cycles, so the above intuition suggests that all we have to do is to minimize the number of even short cycles. This means that the graph will locally look like a tree. And indeed, in this case, there is no finite minimizing graph, but the infinite $d$--regular tree will play the role of the minimizing graph. It turns out that this intuition is completely right. Despite the complicated forms of these conjectures, they turned out to be easier than the versions concerning the maximization problems.  The Asymptotic Lower Matching Conjecture was proved by Gurvits \cite{gurvits2011unleashing}, the Lower Matching Conjecture was proved by Csikv\'ari \cite{csikvari2014lower}. 

In case of non-bipartite graphs the intuition suggests that the graph minimizing $\frac{1}{v(G)}\ln M_G(\lambda)$ should contain a lot of short odd cycles, notably triangles, and this suggests $K_{d+1}$, the complete graph on $d+1$ vertices, to be the minimizing graph. The problem is that $K_{d+1}$ also contains a lot of short even cycles which makes the situation very unclear. Checking various graphs with computer suggests the following conjectures:

\begin{Conj} \label{main conjecture}
Let $G$ be a $d$--regular graph. \\
(a) If $d$ is even, then for all $\lambda\geq 0$ we have
$$\frac{1}{v(G)}\ln M_G(\lambda)\geq \frac{1}{v(K_{d+1})}\ln M_{K_{d+1}}(\lambda).$$
(b) If $d$ is odd, then there exists a constant $c_d$ defined later such that for $0\leq \lambda\leq c_d$ we have
$$\frac{1}{v(G)}\ln M_G(\lambda)\geq \frac{1}{v(K_{d+1})}\ln M_{K_{d+1}}(\lambda),$$
and for $\lambda>c_d$, the graph $K_{d+1}$ is never the minimizer graph.
\end{Conj}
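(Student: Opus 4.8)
The plan is to pass to the matching measure and reduce the whole inequality to a comparison of its low moments. First, since for a disjoint union $H=\bigsqcup_iH_i$ the quantity $\frac1{v(H)}\ln M_H(\lambda)$ is the convex combination $\sum_i\frac{v(H_i)}{v(H)}\cdot\frac1{v(H_i)}\ln M_{H_i}(\lambda)$, it suffices to prove the inequality for connected $G$; and if $G\cong K_{d+1}$ there is equality, so I may assume $G$ is connected, $d$--regular and contains no $K_{d+1}$ subgraph. Let $\mu(G,x)=\sum_k(-1)^km_k(G)x^{v(G)-2k}$ be the matching polynomial; its zeros are symmetric about $0$ and $M_G(\lambda)=\prod_j(1+\gamma_j^2\lambda)$, where $\gamma_j\ge0$ runs over the nonnegative zeros with multiplicity, so
$$\frac1{v(G)}\ln M_G(\lambda)=\frac12\int_{\mathbb R}\ln(1+\lambda x^2)\,d\rho_G(x),$$
where $\rho_G$ is the uniform probability measure on the $v(G)$ zeros of $\mu(G,x)$. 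By Heilmann--Lieb, $\rho_G$ and $\rho_{K_{d+1}}$ are both supported in $[-2\sqrt{d-1},2\sqrt{d-1}]$, so for $0<\lambda<(4d)^{-2}$ one has $0\le\lambda x^2<\tfrac1{4d}$ on both supports, which keeps the Taylor expansion of $\ln(1+\lambda x^2)$ under control.

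Next I compare moments. By Godsil's theorem the power sum $p_{2m}(G)=v(G)\int x^{2m}\,d\rho_G$ equals the number of closed tree--like walks of length $2m$ in $G$, so $\int x^{2m}\,d\rho_G$ is a local quantity; for any $d$--regular $G$ it equals $d$ when $m=1$, equals $2d^2-d$ when $m=2$, and
$$\int x^6\,d\rho_G=W_6-\frac{6\,t_3(G)}{v(G)},$$
where $t_3(G)$ is the number of triangles and $W_6$ is the number of closed walks of length $6$ from the root of the infinite $d$--regular tree. Hence $\rho_G$ and $\rho_{K_{d+1}}$ have equal second and fourth moments, while, since every vertex of a $d$--regular graph lies in at most $\binom d2$ triangles — with equality forcing a $K_{d+1}$ subgraph, hence a $K_{d+1}$ component, which is excluded here —
$$\delta(G):=\int x^6\,d\rho_G-\int x^6\,d\rho_{K_{d+1}}=2\binom d2-\frac{6\,t_3(G)}{v(G)}\ \ge\ 2 .$$

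Now I would exploit the matching of the first two moments. Writing $\ln(1+u)=u-\tfrac{u^2}2+r(u)$ with $r(u)=\sum_{k\ge3}\tfrac{(-1)^{k-1}}{k}u^k\ge0$, the $u$ and $u^2$ terms cancel between $G$ and $K_{d+1}$ and
$$q(G,\lambda):=\frac1{v(G)}\ln M_G(\lambda)-\frac1{v(K_{d+1})}\ln M_{K_{d+1}}(\lambda)=\frac12\Big(\int r(\lambda x^2)\,d\rho_G-\int r(\lambda x^2)\,d\rho_{K_{d+1}}\Big).$$
On $0\le u<\tfrac1{4d}$ one has $\tfrac{u^3}3-\tfrac{u^4}4\le r(u)\le\tfrac{u^3}3$, hence $q(G,\lambda)\ge\tfrac{\lambda^3}6\,\delta(G)-\tfrac{\lambda^4}8\int x^8\,d\rho_G$; bounding $\int x^8\,d\rho_G$ (and any further moments) by the closed forms for the $d$--regular tree, or crudely by $(2\sqrt{d-1})^{8}$, reduces the claim to a one--variable inequality on $(0,(4d)^{-2})$ in which the positive term $\tfrac{\lambda^3}6\cdot2$ has to beat the fourth-- and higher--order tail. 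This final estimate is the main obstacle: the crude bookkeeping just indicated only yields the inequality for $\lambda$ of order $d^{-4}$, so reaching the stated threshold $(4d)^{-2}$ forces one to be sharper — using the exact closed forms for the tree walk counts $W_{2m}$ and for the moments of $\rho_{K_{d+1}}$, together with the sign information $\int x^{2m}d\rho_G\ge\int x^{2m}d\rho_{K_{d+1}}$ for the next few values of $m$, or else a finer description of how triangles and short even cycles shift $\rho_G$ away from the tree measure.

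Finally, for $d=3$ the range $\lambda<0.3575$ lies well outside the region where this Taylor squeeze is valid ($\lambda x^2$ can exceed $1$ on the support), so I would treat it separately. Here the advantage is that there are only finitely many relevant local configurations, and in particular the near--extremal graphs — those for which $\delta(G)$ is close to its minimum value $2$ — are highly constrained and can be analysed essentially by hand; a sharper estimate of $q(G,\lambda)$ along these lines is what pushes the admissible range up to $0.3575$.
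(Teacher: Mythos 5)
You should first be clear about what is being asked: the statement is Conjecture~\ref{main conjecture}, which the paper explicitly leaves open. The paper only establishes it in restricted regimes (Theorems~\ref{main}, \ref{3-reg main} and \ref{even d}), plus the final clause of part (b) — that $K_{d+1}$ is never the minimizer for $\lambda>c_d$ — via the necklace covers of Section~\ref{covers and necklaces}, which is also where $c_d$ is defined as the unique positive root of $Q_d(\lambda)=q_dq_{d-2}-q_{d-1}^2$. Your proposal does not prove the conjecture either: it is entirely perturbative in $\lambda$, says nothing about ``all $\lambda\ge 0$'' for even $d$, nothing about the full range $[0,c_d]$ for odd $d$, and nothing at all about the necklace construction behind the last clause of (b). So at best you are sketching a proof of Theorem~\ref{main}, and you should say so.

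Even as a proof of Theorem~\ref{main} there is a genuine gap, which you yourself flag: truncating the alternating series after the $u^3$ term with a $u^4$-size error gives $\frac{\lambda^3}{3}\cdot\delta(G)/2-O(\lambda^4(d-1)^4)$, which is positive only for $\lambda=O(d^{-4})$ — i.e.\ you recover the Davies--Jenssen--Perkins bound, not the improvement. The two missing ideas are these. First, truncate after the \emph{fifth}-order term: the inequality $a_5(G)\ge a_5(K_{d+1})$ is free from the path-tree comparison (Lemma~\ref{comparison}), so the $\lambda^5$ term has the right sign and the genuine tail is only $\frac{1}{6}(4(d-1)\lambda)^6$, which is beaten by $\lambda^3$ precisely when $\lambda<(4d)^{-2}$. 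Second, the fourth-order term is \emph{harmful} (it enters with a minus sign and $a_4(G)\ge a_4(K_{d+1})$), and it involves $4$-cycles as well as triangles; the paper controls it by the combinatorial inequality $\rho(C_4,K_{d+1})-\rho(C_4,G)\le\frac{3(d-2)}{2}\left(\rho(C_3,K_{d+1})-\rho(C_3,G)\right)$ (Lemma~\ref{main-lemma}(b)), which lets the entire $\lambda^4$ contribution be absorbed into the $\lambda^3$ triangle surplus once $\lambda\lesssim 1/d$. Your closing remark that one should use ``sign information for the next few moments or a finer description of how short cycles shift $\rho_G$'' gestures in this direction but does not supply the inequality, and without it the argument stalls at $d^{-4}$. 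Finally, your plan for $d=3$ is not how the paper proceeds: there is no finite analysis of near-extremal local configurations; instead $\ln(1+x)$ is replaced by degree-$4$ Remez best approximations on a ladder of intervals $[0,A]$, chosen so that the coefficients have the sign pattern needed to reuse the same triangle/$4$-cycle bookkeeping.
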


The constants $c_d$ will be defined in Section~\ref{covers and necklaces}. At this moment we just remark that $c_3=1$ and $c_3<c_5<c_7<\dots$ and $\lim_{d\to \infty}c_d=\infty$.

Concerning this conjecture Davies, Perkins and Jenssen \cite{davies2020proof} proved the following result.

\begin{Th}[Davies, Perkins and Jenssen \cite{davies2020proof}]
There exists an absolute constant $c$ such that if $0\leq \lambda<cd^{-4}$ and $G$ is a $d$--regular graph, then 
$$\frac{1}{v(G)}\ln M_G(\lambda)\geq \frac{1}{v(K_{d+1})}\ln M_{K_{d+1}}(\lambda).$$
\end{Th}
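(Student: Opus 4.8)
The plan is to treat $F_G(\lambda):=\frac{1}{v(G)}\ln M_G(\lambda)$ as the free energy of the monomer--dimer model and to analyse it through its cluster (Mayer) expansion, with single edges as polymers and two edges incompatible when they meet. Since $M_G=\prod_C M_C$ over the connected components $C$ of $G$ and $\ln M_{K_{d+1}}(\lambda)>0$ for $\lambda>0$, it suffices to prove $\frac1{v(C)}\ln M_C(\lambda)\ge\frac1{d+1}\ln M_{K_{d+1}}(\lambda)$ for every connected $d$--regular graph $C$; as there is equality when $C=K_{d+1}$, I assume $C$ is connected, $d$--regular and $C\ne K_{d+1}$. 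The elementary observation that drives everything is that then \emph{every} vertex $v$ of $C$ lies in at most $\binom{d}{2}-1$ triangles — a vertex in $\binom{d}{2}$ triangles would have $N[v]\cong K_{d+1}$, which would then be a whole component — so the total triangle defect satisfies
$$D(C):=v(C)\binom{d}{2}-3\,t(C)=\sum_{v\in V(C)}\Big(\binom{d}{2}-t_v(C)\Big)\ \ge\ v(C),$$
where $t(C)$ is the number of triangles of $C$ and $t_v(C)$ the number through $v$.

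Next I would expand $\ln M_C$. By the classical Heilmann--Lieb / Godsil bound on the zeros of the matching polynomial, $M_G$ is zero--free in a disk $|\lambda|<c_0/d$, so $\ln M_G(\lambda)=\sum_{k\ge 1}b_k(G)\lambda^k$ there, and on a slightly smaller disk the edge--polymer cluster expansion converges absolutely, giving $b_k(G)=\sum_{W:\,\|W\|=k}\phi(W)$ over connected clusters of total edge--count $k$. This yields two facts. (i) \emph{Low order.} For a $d$--regular graph $b_1(G)=\frac{d}{2}v(G)$ and $b_2(G)=g_2(d)\,v(G)$ depend on $d$ only, while a short count — the only connected graphs with three edges are $P_4$, $K_{1,3}$, $K_3$, and the number of copies of $P_4$ through an edge involves the codegrees, hence the triangles — gives $b_3(G)=g_3(d)\,v(G)-t(G)$. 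Hence the orders $\le 3$ of the two expansions match up to the defect:
$$\sum_{k=1}^{3}\Big(b_k(C)-\tfrac{v(C)}{d+1}b_k(K_{d+1})\Big)\lambda^k=\Big(\tfrac{v(C)}{d+1}t(K_{d+1})-t(C)\Big)\lambda^3=\tfrac13 D(C)\,\lambda^3\ \ge\ \tfrac{1}{3}v(C)\,\lambda^3 .$$
(ii) \emph{Tail.} Uniformly over $d$--regular hosts, the number of connected clusters of total edge--count $k$ through a fixed vertex is at most $(C_1 d)^k$, and the Ursell weights obey a Koteck\'y--Preiss bound, so $|b_k(G)|\le (C_2 d)^k v(G)$ for every $k\ge 4$.

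Combining (i) and (ii) and bounding the orders $\ge 4$ of both expansions,
$$\ln M_C(\lambda)-\tfrac{v(C)}{d+1}\ln M_{K_{d+1}}(\lambda)\ \ge\ v(C)\Big(\tfrac{\lambda^3}{3}-2\sum_{k\ge 4}(C_2 d)^k\lambda^k\Big)=v(C)\Big(\tfrac{\lambda^3}{3}-\tfrac{2(C_2 d\lambda)^4}{1-C_2 d\lambda}\Big),$$
which is positive as soon as $\lambda<c\,d^{-4}$ for a small absolute constant $c$; dividing by $v(C)$ gives the claim (the case $\lambda=0$ being trivial).

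The step I expect to be the real work is (ii): producing the clean, degree--uniform estimate $|b_k(G)|\le (C_2 d)^k v(G)$ for the cluster--expansion coefficients of $\ln M_G$, valid for \emph{all} $d$--regular graphs simultaneously. This follows from the standard convergence theory for polymer models (edges as polymers, activity $\lambda$, incompatibility degree $2(d-1)$), but turning the abstract bound into the per--vertex form above requires some care; alternatively one can run Cauchy estimates on a circle strictly inside the Heilmann--Lieb zero--free disk together with a rooting argument for $\ln M_G$. The remaining ingredients — the reduction to connected graphs, the defect identity, and the explicit values of $b_1,b_2,b_3$ — are routine, the only slightly delicate point being the $b_3$ computation, where counting the paths $P_4$ forces the codegrees, hence $t(G)$, into the coefficient with the favourable sign. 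Finally, I would expect the exponent $d^{-4}$ not to be optimal: improving it towards $(4d)^{-2}$, as in the present paper, should require exploiting the sign cancellations among the tree--like clusters in $\phi$, or working directly with the root distribution of the matching polynomial instead of truncating the expansion after the cubic term.
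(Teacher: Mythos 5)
Your argument is correct and is essentially the method of this paper's proof of Theorem~\ref{main} (which, as noted in Section~\ref{preliminaries}, is also the Mayer-expansion method of Davies, Jenssen and Perkins): reduce to connected components, expand $\frac{1}{v(G)}\ln M_G(\lambda)$ in powers of $\lambda$, observe that the coefficients of $\lambda,\lambda^2$ are degree-determined while the $\lambda^3$ coefficient carries a triangle defect of at least $\tfrac13$ per vertex (Lemma~\ref{main-lemma}(a)), and control the tail; the only difference is that you truncate after $k=3$, which is exactly why you land at $\lambda<cd^{-4}$ rather than the paper's $(4d)^{-2}$ obtained by truncating at $k=5$ with the additional $C_4$-lemma. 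The tail estimate you flag as the main work is in fact immediate from the Heilmann--Lieb theorem, since $|b_k(G)|\le \tfrac{1}{2k}(4(d-1))^k v(G)$ follows directly from $\gamma_i\in[0,4(d-1)]$, with no need for Koteck\'y--Preiss machinery.
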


In this paper we improve on this result as follows.

\begin{Th} \label{main}
If $0\leq \lambda<(4d)^{-2}$  and $G$ is a $d$--regular graph, then 
$$\frac{1}{v(G)}\ln M_G(\lambda)\geq \frac{1}{v(K_{d+1})}\ln M_{K_{d+1}}(\lambda).$$
\end{Th}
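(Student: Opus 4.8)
The plan is to work with the ``occupancy fraction'' / local view of the monomer-dimer model and compare $G$ edge-locally to $K_{d+1}$. First I would recall the standard identity expressing $\frac{1}{v(G)}\ln M_G(\lambda)$ as an integral over $\lambda$ of an edge-density functional: writing $p_G(\lambda) = \frac{\lambda M_G'(\lambda)}{v(G)M_G(\lambda)}$ for the expected number of matching edges per vertex in the Gibbs measure at activity $\lambda$, one has $\frac{1}{v(G)}\ln M_G(\lambda) = \int_0^\lambda \frac{p_G(t)}{t}\,dt$. So it suffices to show $p_G(\lambda) \geq p_{K_{d+1}}(\lambda)$ for all $0\le\lambda<(4d)^{-2}$. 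The quantity $p_G(\lambda)$ can in turn be written, by a local computation at a uniformly random edge $uv$, in terms of the ratio $R_{G-u-v}/R_{G-v}$ where $R_H = M_H(\lambda)$; more usefully, one reduces everything to the vertex quantities $q_v = \lambda M_{G-v}(\lambda)/M_G(\lambda)$, which satisfy the recursion $q_v = \lambda / \bigl(1 + \sum_{w\sim v} q_w^{(v)}\bigr)$ where $q_w^{(v)}$ is the analogous quantity in $G-v$. This recursion is a contraction for small $\lambda$, and this is exactly where the hypothesis $\lambda < (4d)^{-2}$ will be used.

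The key steps, in order: (1) Express $\frac{1}{v(G)}\ln M_G(\lambda)$ via the integral of $p_G(t)/t$ and reduce the theorem to the pointwise inequality $p_G(\lambda)\ge p_{K_{d+1}}(\lambda)$. (2) Using the tree-like recursion for the $q$-quantities, show that for $\lambda<(4d)^{-2}$ every $q_v$ (and every ``partial'' $q_w^{(v)}$ arising in the recursion) lies in a narrow interval $I_\lambda$ around the tree fixed point $q^* = q^*(\lambda, d)$ solving $q^* = \lambda/(1 + d q^*)$; the width of $I_\lambda$ is controlled by the contraction constant, which is roughly $d q^* \cdot$ (something) $< 1/2$ precisely when $\lambda < (4d)^{-2}$ up to constants. (3) Show that $p_G(\lambda)$ is a monotone/convex function of these local quantities so that, among all admissible local configurations, the minimum of the ``edge contribution'' is attained when the graph closes up into as many triangles as possible — i.e.\ locally looks like $K_{d+1}$. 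Concretely I would bound $p_G(\lambda)$ from below by the value of the corresponding expression on $K_{d+1}$, using that on $K_{d+1}$ all the $q$-values are equal by symmetry and can be computed explicitly, and that the deviations coming from $G$ being merely \emph{close} to the tree are smaller than the gap $p_{\text{tree}}(\lambda) - p_{K_{d+1}}(\lambda)$ would suggest one needs — here one uses that $K_{d+1}$ beats the tree by a quantity of order $\lambda^3$ (the triangle term), while the error terms from truncating the recursion are of order $\lambda^3 \cdot (\text{geometric factor})$ with the geometric factor $<1$ thanks to $\lambda<(4d)^{-2}$.

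I expect the main obstacle to be step (3): it is not true that $p_G \ge p_{\text{tree}}$ (indeed $K_{d+1}$ is believed to be the global minimizer, which is \emph{smaller} than the tree value), so one cannot simply compare to the infinite tree; one must genuinely show that adding short odd cycles can only help relative to $K_{d+1}$, i.e.\ that $K_{d+1}$ is optimal even within the class of ``graphs close to a tree with some triangles.'' The technical heart will be a careful accounting of the first few terms of the cluster-expansion-type series for $p_G(\lambda)$ in powers of $\lambda$: matching the constant and linear terms with $K_{d+1}$, identifying the $\lambda^2$ and $\lambda^3$ contributions (where triangles enter), and then showing the tail $\sum_{k\ge 4}(\cdots)\lambda^k$ is dominated, uniformly over $d$-regular $G$, by the favorable low-order terms whenever $\lambda<(4d)^{-2}$. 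The constant $(4d)^{-2}$ should emerge as exactly the radius within which the relevant generating-function tail is a convergent geometric-type series with ratio safely below $1$. For $d=3$ the sharper bound $\lambda<0.3575$ presumably comes from doing this comparison exactly rather than through crude tail estimates, possibly combined with the explicit constant $c_3 = 1$ from the conjecture and an ad hoc analysis of the finitely many ``dangerous'' local configurations at $d=3$.
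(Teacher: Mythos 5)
Your reduction in step (1) is sound: since $M_G(0)=1$, the identity $\frac{1}{v(G)}\ln M_G(\lambda)=\int_0^\lambda \frac{p_G(t)}{t}\,dt$ is correct, and the theorem would indeed follow from the pointwise occupancy inequality $p_G(t)\ge p_{K_{d+1}}(t)$ on $[0,\lambda]$. This is the occupancy-fraction route of Davies, Jenssen, Perkins and Roberts, and it is genuinely different from what the paper does (the paper expands $\frac{1}{v(G)}\ln M_G(\lambda)=\sum_k a_k(G)\frac{(-1)^{k-1}\lambda^k}{k}$, where $a_k$ are normalized power sums of the squared matching-polynomial roots, truncates at $k=5$, and compares coefficients using the path-tree inequality $a_k(K_{d+1})\le a_k(G)$ together with subgraph-count formulas for $a_3$ and $a_4$). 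However, your proposal has a genuine gap: step (3), which you yourself flag as the obstacle, is not an obstacle you have sketched a way around --- it \emph{is} the theorem, restated in local form, and no argument is given for it. Saying that the minimum of the edge contribution "is attained when the graph closes up into as many triangles as possible" is precisely the claim to be proved, and the contraction estimates of step (2) do not by themselves distinguish $K_{d+1}$ from any other locally tree-like-plus-triangles configuration.

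More concretely, the heuristic you offer --- that $K_{d+1}$ "beats the tree by a quantity of order $\lambda^3$ (the triangle term)" while the errors are smaller --- omits the term that actually threatens the inequality: the order-$\lambda^4$ contribution, which enters with the \emph{unfavorable} sign and is driven by $4$-cycles (and by triangles again, with a coefficient of order $d$). Since $K_{d+1}$ is maximally rich in $4$-cycles as well as triangles, the $\lambda^4$ term pushes in the opposite direction from the $\lambda^3$ term, and one must prove a quantitative trade-off: the paper's Lemma~\ref{main-lemma} shows that the $4$-cycle deficit $\rho(C_4,K_{d+1})-\rho(C_4,G)$ is at most $\frac{3(d-2)}{2}$ times the triangle deficit, and that the triangle deficit is at least $\frac13$ whenever $G$ has no $K_{d+1}$ component; only then does the favorable $\lambda^3$ term dominate. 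Without an analogue of this lemma your plan cannot close, whatever expansion you use for $p_G$. Finally, the constant $(4d)^{-2}$ does not arise as a radius of convergence of any series (that would give roughly $\frac{1}{4(d-1)}$, the natural barrier the paper mentions); it comes from balancing the gain $\lambda^3$ against the losses $15d\lambda^4$ and $(4d\lambda)^6$ in the final explicit inequality, so your expectation about where the constant comes from is also off.
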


This theorem is still very very far from the conjecture. 
\bigskip

As we will see in the proof of Theorem~\ref{main} a natural barrier of the argument is $1/(4(d-1))$. For $d=3$ this is $0.125$. Still we will be able to overcome this barrier. In particular, we will prove the following result.

\begin{Th} \label{3-reg main}
If $G$ is a $3$--regular graph and $0<\lambda<0.3575$, then
$$\frac{1}{v(G)}\ln M_G(\lambda)\geq \frac{1}{v(K_{4})}\ln M_{K_{4}}(\lambda).$$
\end{Th}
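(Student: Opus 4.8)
The plan is to pass to the \emph{matching measure}. Writing $\mu(G,x)=\sum_k(-1)^km_k(G)x^{v(G)-2k}$ for the matching polynomial, with real, symmetric roots $\theta_1,\dots,\theta_{v(G)}$, one has the identity
$$\frac{1}{v(G)}\ln M_G(\lambda)=\frac12\int_{\mathbb{R}}\ln(1+\lambda x^2)\,d\rho_G(x),\qquad \rho_G:=\frac{1}{v(G)}\sum_{i=1}^{v(G)}\delta_{\theta_i},$$
and, since by Godsil's path–tree divisibility (together with Heilmann--Lieb) the roots of $\mu(G,x)$ for a graph of maximum degree $3$ lie in $[-2\sqrt2,2\sqrt2]$, we have $\operatorname{supp}\rho_G\subseteq[-2\sqrt2,2\sqrt2]$. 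So Theorem~\ref{3-reg main} is equivalent to $\int\ln(1+\lambda x^2)\,d\rho_G\ge\int\ln(1+\lambda x^2)\,d\rho_{K_4}$ for $0<\lambda<0.3575$, where, because $\mu(K_4,x)=x^4-6x^2+3$, the measure $\rho_{K_4}$ is (in the variable $y=x^2$) supported exactly on $y_{1,2}=3\mp\sqrt6$.

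The second ingredient is a moment comparison. With $y=x^2$, a direct computation from $m_1(G),m_2(G)$ — which for a $3$--regular graph depend only on $v(G)$ — gives $\int y^0\,d\rho_G=1$, $\int y\,d\rho_G=3$, $\int y^2\,d\rho_G=15$, so these three moments agree with those of $\rho_{K_4}$. For the next moment, using the elementary identity $m_3(G)=f(v(G))-t(G)$ (where $t(G)$ counts triangles and $f$ is an explicit cubic), one gets
$$\int y^3\,d\rho_G=87-\frac{6\,t(G)}{v(G)}\ \ge\ 87-6=81=\int y^3\,d\rho_{K_4},$$
since every vertex of a $3$--regular graph lies in at most three triangles, so $t(G)\le v(G)$, with equality exactly for disjoint unions of copies of $K_4$. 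This is the one genuine combinatorial inequality, and it is tight precisely at the conjectured extremizer.

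Next I would glue the two together by a Hermite–interpolation trick. Fix $\lambda$ and let $P_\lambda(y)=a_3y^3+a_2y^2+a_1y+a_0$ be the unique cubic agreeing with $\tfrac12\ln(1+\lambda y)$ in value and in first derivative at both $y_1$ and $y_2$. Because $\rho_{K_4}$ is supported (in $y$) at exactly $y_1,y_2$, the value–matching alone gives $\int P_\lambda(x^2)\,d\rho_{K_4}=\tfrac12\int\ln(1+\lambda x^2)\,d\rho_{K_4}=\tfrac1{v(K_4)}\ln M_{K_4}(\lambda)$; and since the moments $1,3,15$ are graph independent, $\int P_\lambda(x^2)\,d\rho_G-\int P_\lambda(x^2)\,d\rho_{K_4}=a_3\bigl(\int y^3\,d\rho_G-81\bigr)$. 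Hence, \emph{provided} (i) $a_3\ge0$ and (ii) $P_\lambda(y)\le\tfrac12\ln(1+\lambda y)$ for all $y\in[0,8]$, the chain
$$\frac{1}{v(G)}\ln M_G(\lambda)\ \ge\ \int P_\lambda(x^2)\,d\rho_G\ \ge\ \int P_\lambda(x^2)\,d\rho_{K_4}\ =\ \frac{1}{v(K_4)}\ln M_{K_4}(\lambda)$$
finishes the proof: the first step is (ii) applied pointwise on $\operatorname{supp}\rho_G$, the second is (i) together with the sixth–moment inequality, and the last is the interpolation identity. Crucially this argument never expands $\ln(1+\lambda x^2)$ in a power series, so it is not limited by the radius of convergence $\lambda<\tfrac1{4(d-1)}=\tfrac18$ that caps the cruder term–by–term moment comparison.

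The hard part is to verify (i) and (ii) for every $\lambda\in(0,0.3575)$. Solving the $4\times4$ Hermite system makes $a_3=a_3(\lambda)$ and $g_\lambda(y):=\tfrac12\ln(1+\lambda y)-P_\lambda(y)$ explicit; $g_\lambda$ vanishes to second order at $y_1,y_2$ by construction, so the task is to rule out any further sign change of $g_\lambda$ on $[0,8]$ and to check $a_3(\lambda)\ge0$. Since $g_\lambda$ is transcendental one cannot simply count roots: I would bound $g_\lambda''$ (or clear the denominator $(1+\lambda y)^2$ and reduce to a genuine polynomial inequality) to pin down where positivity first fails. The binding constraints sit near the endpoints $y=0$, $y=8$ and in the gap $(y_1,y_2)$, and it is exactly this analysis that yields the numerical threshold $0.3575$; the method would reach further only by interpolating with a higher–degree polynomial and invoking, in addition, the analogous (and favorably signed) comparisons of the eighth and tenth moments of $\rho_G$ against those of $\rho_{K_4}$.
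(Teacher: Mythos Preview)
Your framework---rewriting $\frac{1}{v(G)}\ln M_G(\lambda)$ as $\tfrac12\int\ln(1+\lambda x^2)\,d\rho_G$, noting that the moments $\int y^k\,d\rho_G$ for $k=0,1,2$ are graph--independent, and bounding the third moment via the triangle count---is exactly right and matches the paper's setup. The gap is in the Hermite step.

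Set $f(y)=\tfrac12\ln(1+\lambda y)$. The cubic Hermite interpolant $P_\lambda$ at $y_1,y_2$ (each with multiplicity~$2$) satisfies the standard remainder formula
\[
g_\lambda(y)=f(y)-P_\lambda(y)=\frac{f^{(4)}(\xi_y)}{4!}\,(y-y_1)^2(y-y_2)^2
\]
for some $\xi_y$ in the convex hull of $\{y,y_1,y_2\}$. But $f^{(4)}(y)=-\tfrac{3\lambda^4}{(1+\lambda y)^4}<0$ on $[0,8]$, so $g_\lambda(y)\le 0$ \emph{identically}: the Hermite cubic lies \emph{above} $f$, not below. Thus your condition~(ii), $P_\lambda\le f$ on $[0,8]$, fails for every $\lambda>0$, and there is nothing to salvage by ``ruling out further sign changes''---the sign is simply wrong throughout. (Your condition~(i) does hold, since $a_3=f[y_1,y_1,y_2,y_2]=\tfrac{1}{6}f'''(\eta)>0$; but with $P_\lambda\ge f$ the chain collapses to $\int f\,d\rho_G\le\int P_\lambda\,d\rho_G$, which is useless for a lower bound.) More generally, no cubic can touch $f$ from below at two interior points of $[0,8]$: equality at $y_1,y_2$ with $f-P\ge 0$ forces double zeros there, hence $P$ is the Hermite interpolant, and we are back to the wrong sign.

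The paper circumvents this by abandoning exact interpolation at $\operatorname{supp}\rho_{K_4}$ and instead taking a polynomial $P$ with $|f-P|\le\varepsilon$ uniformly on $[0,A]$; then $\int f\,d\rho_G\ge\int P\,d\rho_G-\tfrac{\varepsilon}{2}$ and $\int f\,d\rho_{K_4}\le\int P\,d\rho_{K_4}+\tfrac{\varepsilon}{2}$, so one only needs $\int P\,d\rho_G-\int P\,d\rho_{K_4}>\varepsilon$. This two--sided slack is paid for by using a degree--$4$ best uniform approximant (computed by the Remez algorithm), exploiting not only $a_3(G)\ge a_3(K_4)$ but also the sharper fact $\rho(C_3,G)\le\tfrac12$ for $K_4$--free $3$--regular $G$, together with $a_5(G)\ge a_5(K_4)$. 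A single interval $[0,A]$ does not suffice; the paper covers $(0,0.3575)$ by a finite ladder of overlapping intervals $(\lambda_{\min}(A),A/8)$ for $A\in\{0.2,0.5,0.9,1.4,1.8,2.3,2.6,2.8,2.87\}$, with the small--$\lambda$ end handled by Theorem~\ref{main}.
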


Concerning the case of even $d$ we believe that 
$$\frac{1}{v(G)}\ln M_G(\lambda)\geq \frac{1}{v(K_{d+1})}\ln M_{K_{d+1}}(\lambda)$$
holds true for all $\lambda$ and $d$--regular graph $G$. 
We prove that this is indeed the case if $\lambda$ is large enough.

\begin{Th} \label{even d}
Suppose that $d$ is even. Then there exists a $C(d)$ such that if $\lambda>C(d)$, then for every $d$--regular graph $G$ we have
$$\frac{1}{v(G)}\ln M_G(\lambda)\geq \frac{1}{v(K_{d+1})}\ln M_{K_{d+1}}(\lambda).$$
\end{Th}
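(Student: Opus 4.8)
The plan is to exploit the fact that for large $\lambda$ the partition function $M_G(\lambda)$ is dominated by matchings of size close to the matching number $\nu(G)$, so that the asymptotic behaviour of $\frac{1}{v(G)}\ln M_G(\lambda)$ is governed by the leading term $\frac{\nu(G)}{v(G)}\ln\lambda$ together with a bounded correction. More precisely, I would first record the elementary two–sided bound
$$m_{\nu(G)}(G)\,\lambda^{\nu(G)}\ \le\ M_G(\lambda)\ \le\ (\nu(G)+1)\,\binom{v(G)}{2\nu(G)}\max_k m_k(G)\cdot\lambda^{\nu(G)},$$
valid for $\lambda\ge 1$, so that
$$\frac{1}{v(G)}\ln M_G(\lambda)=\frac{\nu(G)}{v(G)}\ln\lambda+O_d(1),$$
where the $O_d(1)$ term is uniform over all $d$–regular graphs $G$ (it is controlled by the number of matchings, which for a $d$–regular graph on $n$ vertices is at most $C(d)^n$ for an explicit $C(d)$). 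For $K_{d+1}$ with $d$ even, $\nu(K_{d+1})=\frac{d}{2}$ and $v(K_{d+1})=d+1$, so $\frac{\nu(K_{d+1})}{v(K_{d+1})}=\frac{d}{2(d+1)}$.

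The key structural input is that for $d$ even, every $d$–regular graph $G$ has a near–perfect matching in the strong sense that $\frac{\nu(G)}{v(G)}\ge \frac{d}{2(d+1)}$, with equality characterizing exactly the disjoint unions of copies of $K_{d+1}$ among the extremal configurations. This is a consequence of the Berge–Tutte formula: if $\nu(G)<\frac{d}{2(d+1)}v(G)$ then there is a set $S$ with $\mathrm{odd}(G-S)>|S|+\frac{1}{d+1}(v(G)-2|S|)\cdot\frac{?}{}$ — more cleanly, one counts edges leaving the odd components of $G-S$: each odd component $C$ with $|C|\le d$ would force (by $d$–regularity and parity) at least $|C|\le d$ edges, but small odd components are expensive, and a short computation with the Gallai–Edmonds decomposition shows $d$–regular graphs cannot have too large a deficiency; the precise statement I want is
$$\nu(G)\ \ge\ \frac{d}{2(d+1)}\,v(G),$$
which for $d$ even follows because the only $d$–regular graph in which deficiency is maximized per vertex is $K_{d+1}$ itself (every odd component of $G-S$ has at least $d+1$ vertices unless it is a single vertex, and single vertices in a $d$-regular graph force $|S|$ large). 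I would isolate this as a lemma.

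Granting the lemma, the argument finishes quickly: for $d$ even and any $d$–regular $G$,
$$\frac{1}{v(G)}\ln M_G(\lambda)-\frac{1}{v(K_{d+1})}\ln M_{K_{d+1}}(\lambda)=\Bigl(\frac{\nu(G)}{v(G)}-\frac{d}{2(d+1)}\Bigr)\ln\lambda+O_d(1)\ \ge\ O_d(1),$$
and if $\frac{\nu(G)}{v(G)}>\frac{d}{2(d+1)}$ then since $v(G)$ is bounded when the fraction is bounded away from $\frac{d}{2(d+1)}$... — here care is needed, because the gap $\frac{\nu(G)}{v(G)}-\frac{d}{2(d+1)}$ can be arbitrarily small (take $G=(\text{many }K_{d+1})\sqcup(\text{one other component})$). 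To handle this I would instead compare against disjoint unions directly: write $G$'s Gallai–Edmonds / component structure and use the super-multiplicativity $\frac{1}{v(G)}\ln M_G(\lambda)\ge \min_i \frac{1}{v(G_i)}\ln M_{G_i}(\lambda)$ over connected components to reduce to $G$ connected and $G\ne K_{d+1}$; for connected $d$–regular $G\ne K_{d+1}$ one has $v(G)\ge d+2$, and then one shows either $\nu(G)/v(G)>\frac{d}{2(d+1)}$ strictly with a gap bounded below in terms of $v(G)$ that still beats the additive $O_d(1)$ once $\lambda>C(d)$ is large (using $v(G)\ln\lambda$ scaling), or $\nu(G)/v(G)=\frac{d}{2(d+1)}$ is impossible for connected $G\ne K_{d+1}$ by the equality case of the lemma.

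The main obstacle is precisely this last reduction: turning "$\nu(G)/v(G)\ge \frac{d}{2(d+1)}$ with equality only for $K_{d+1}$" into a quantitative statement strong enough to dominate the bounded error term for all $G$ simultaneously. I expect the clean way is to prove that for connected $d$–regular $G\ne K_{d+1}$ one actually has $\nu(G)\ge \frac{d}{2(d+1)}v(G)+c_d$ for a constant $c_d>0$ depending only on $d$ (not on $v(G)$) — this should follow from the Gallai–Edmonds structure, since a positive deficiency per the Berge formula comes in integer jumps — and then $C(d)$ can be taken so that $c_d\ln\lambda$ exceeds twice the uniform $O_d(1)$ bound on the correction terms. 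Everything else is bookkeeping with the crude bounds on $m_k$ for $d$–regular graphs.
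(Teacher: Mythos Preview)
Your overall strategy --- reduce to connected $G\neq K_{d+1}$, compare $\nu(G)/v(G)$ to $\frac{d}{2(d+1)}$, and use that $M_G(\lambda)$ is governed by $\lambda^{\nu(G)}$ for large $\lambda$ --- is exactly the paper's approach. The gap is in the quantitative form of the matching lemma you propose. You aim to prove that for connected $d$--regular $G\neq K_{d+1}$ one has $\nu(G)\ge \frac{d}{2(d+1)}v(G)+c_d$ with $c_d>0$ depending only on $d$. Even if true, this is not strong enough: after dividing by $v(G)$ the gain in the coefficient of $\ln\lambda$ is only $c_d/v(G)$, so to dominate the bounded error you would need $\frac{c_d}{v(G)}\ln\lambda\gtrsim 1$, i.e.\ $\ln\lambda\gtrsim v(G)$. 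Since $v(G)$ is unbounded, no single threshold $C(d)$ can work. Your parenthetical ``using $v(G)\ln\lambda$ scaling'' does not rescue this, and the ``integer jumps'' heuristic from Gallai--Edmonds produces exactly such an additive (not per-vertex) improvement.

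What is actually needed, and what the paper proves, is a \emph{multiplicative} improvement: if $G$ has no $K_{d+1}$ component then $\nu(G)/v(G)\ge \frac{d+2}{2(d+3)}$, which exceeds $\frac{d}{2(d+1)}$ by the fixed amount $\frac{1}{(d+1)(d+3)}$ independent of $v(G)$. The paper obtains this not via Gallai--Edmonds but via Edmonds' description of the matching polytope: one verifies that the constant vector $x_e=\frac{d+2}{d(d+3)}$ satisfies all the odd-set constraints (the only delicate case is $|S|=d+1$, where one uses $|E(S)|\le\binom{d+1}{2}-1$ since there is no $K_{d+1}$ component), hence $x$ lies in the matching polytope, hence some matching has size at least $|x|=\frac{d+2}{2(d+3)}v(G)$. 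With this uniform ratio gap the crude bounds $M_G(\lambda)\ge\lambda^{\nu(G)}$ and $M_{K_{d+1}}(\lambda)\le (d+1)^{d+1}\lambda^{d/2}$ finish the proof for an explicit $C(d)$.
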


\bigskip

\noindent \textbf{Notations.} If $G=(V,E)$ is a graph with vertex set $V(G)$ and edge set $E(G)$, then $v(G)$ denotes the number of vertices, and $e(G)$ denotes the number of edges. $K_n$ denotes the complete graph on $n$ vertices, and $C_n$ is the cycle on $n$ vertices. The infinite $d$--regular tree will be denoted by $\mathbb{T}_d$. We will denote by $D$ the diamond graph, the unique graph on $4$ vertices and $5$ edges.

For a vertex $v\in V(G)$ the set of neighbors of $v$ will be denoted by $N_G(v)$. If the graph is clear from the context, then we omit $G$ from the subscript. The degree of a vertex is denoted by $d_v$. The largest degree of a graph $G$ is denoted by $\Delta(G)$. If $G$ is clear from the context, then we simply write $\Delta$.

The set of all matchings is denoted by $\mathcal{M}(G)$.
The number of matchings of size $k$ is denoted by $m_k(G)$.
The size of a largest matching will be denoted by $\nu=\nu(G)$.  The partition function is defined by
$$M_G(\lambda):=\sum_{M\in \mathcal{M}}\lambda^{|M|}=\sum_{k=0}^{\nu(G)}m_k(G)\lambda^k.$$
We will also use the notation $M(G,\lambda)$ for $M_G(\lambda)$ when we wish to emphasize the role of $G$ more. For instance, when we write up a recursion for $M(G,\lambda)$ or study some special graph.

If $G$ and $H$ are graphs, then $N(H,G)$ denotes the number of subgraphs of $G$ isomorphic to $H$. For instance, $N(C_4,K_4)=3$. We will also  use the notation
$$\rho(H,G)=\frac{N(H,G)}{v(G)}$$
and $\rho_k=\rho(C_k,G)$. 

Additional notations and concepts will be introduced in the text. 
\bigskip

\noindent \textbf{This paper is organized as follows.} In the next section we collect a few tools that we will use throughout the paper. In Section~\ref{general d} we will prove the first part of Theorem~\ref{main}. In Section~\ref{3-regular} we will prove the second part of Theorem~\ref{main}. In Section~\ref{covers and necklaces} we will introduce a graph class that beats $K_{d+1}$ for odd $d$ and large $\lambda$. In particular we will introduce the constants $c_d$ of Conjecture~\ref{main conjecture} in this section. In Section~\ref{sec: even d} we prove Theorem~\ref{even d}.

\section{Preliminaries} \label{preliminaries}

Many of the things we we wish to use are in terms of the so-called matching polynomial. The matching polynomial is defined as follows: let $v(G)=n$ and
$$\mu(G,x)=\sum_{k=0}^{\lfloor n/2\rfloor}(-1)^km_k(G)x^{n-2k}.$$
Clearly, $M_G(\lambda)$ and $\mu(G,x)$ encode the same information. The matching polynomial $\mu(G,x)$ is an even or odd function depending on the parity of $v(G)$. Consequently,
the zeros of $\mu(G,x)$ are symmetric to $0$. In fact, the zeros are all real as the following theorem of Heilmann and Lieb shows.

\begin{Th}[Heilmann and Lieb \cite{heilmann1972theory}] \label{th: HeiLie}
All zeros of the matching polynomial $\mu(G,x)$ are real. Furthermore, if the largest degree $\Delta$ satisfies $\Delta \geq 2$, then all zeros lie in the interval $(-2\sqrt{\Delta-1},2\sqrt{\Delta-1})$.
\end{Th}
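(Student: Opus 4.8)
The whole argument rests on the deletion recursions for the matching polynomial, so I would start by recording them. Sorting the matchings of $G$ according to which edge (if any) covers a fixed vertex $v$ gives
$$\mu(G,x)=x\,\mu(G-v,x)-\sum_{u\in N(v)}\mu(G-u-v,x),$$
sorting them according to whether a fixed edge $e=uv$ is used gives $\mu(G,x)=\mu(G-e,x)-\mu(G-u-v,x)$, and $\mu$ is multiplicative over disjoint unions; each identity is checked by comparing the coefficients $m_k$ on both sides. Both halves of the theorem then proceed by induction on $v(G)$.

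For real-rootedness I would prove the stronger assertion that $\mu(G,x)$ has only real zeros and that, for every vertex $v$, the zeros of $\mu(G-v,x)$ interlace those of $\mu(G,x)$ (the zeros of the two polynomials, whose degrees differ by one, alternate on the line). This uses two elementary facts about polynomials with positive leading coefficient and only real zeros: (i) if each of $g_1,\dots,g_m$ interlaces a fixed $f$ of one higher degree, then so does every combination $c_1g_1+\dots+c_mg_m$ with $c_i\ge 0$; and (ii) if $g$ interlaces $f$ and $\deg g=\deg f-1$, then $f$ interlaces $xf-g$. Both are obtained by tracking the sign of the relevant polynomial along the interlacing zeros of $f$, with a perturbation to deal with repeated zeros. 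Applying the vertex recursion at $v$, the inductive hypothesis used inside $G-v$ shows that each $\mu(G-u-v,x)=\mu((G-v)-u,x)$ interlaces $\mu(G-v,x)$; by (i) so does $\sum_{u\in N(v)}\mu(G-u-v,x)$; and by (ii) $\mu(G-v,x)$ interlaces $x\,\mu(G-v,x)-\sum_{u\in N(v)}\mu(G-u-v,x)=\mu(G,x)$, which in particular has only real zeros. The cases $v(G)\le 1$ and $v$ isolated are immediate.

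For the location of the zeros, recall that $\mu(G,x)$ is an even or odd function, so it suffices to prove $\mu(G,x)>0$ for all $x\ge 2\sqrt{\Delta-1}$; the case $\Delta\le 2$ (disjoint unions of paths and cycles) is an explicit computation, so assume $\Delta\ge 3$. Fix such an $x$, set $A=A(x):=\tfrac12\bigl(x+\sqrt{x^2-4(\Delta-1)}\bigr)$ so that $A^2-xA+(\Delta-1)=0$, and rewrite the vertex recursion as the continued-fraction identity
$$\frac{\mu(G-v,x)}{\mu(G,x)}=\Bigl(x-\sum_{u\in N(v)}\frac{\mu(G-v-u,x)}{\mu(G-v,x)}\Bigr)^{-1}.$$
I would then prove by induction on $v(G)$ that $\mu(G,x)>0$ and that $\mu(G-v,x)/\mu(G,x)\le f_{d_v}(x)$ for every $v$, where $f_k(x):=\bigl(x-k/A(x)\bigr)^{-1}$ for $0\le k\le\Delta$ — a quantity that is positive and finite in this range because $xA\ge 2(\Delta-1)>\Delta$, is increasing in $k$, and satisfies $f_{\Delta-1}(x)=1/A(x)$. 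The inductive step uses the decisive observation that a neighbour $u$ of $v$ has degree at most $\Delta-1$ inside $G-v$, so the hypothesis applied in $G-v$ yields $\mu(G-v-u,x)/\mu(G-v,x)\le f_{\Delta-1}(x)=1/A$; summing the $d_v\le\Delta$ such terms and using $A^2-xA+(\Delta-1)=0$ makes the bound reproduce itself, and then $\mu(G,x)=\mu(G-v,x)\big/\bigl(\mu(G-v,x)/\mu(G,x)\bigr)>0$.

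The real-rootedness half is routine once (i) and (ii) are isolated. The genuine obstacle is the sharp constant $2\sqrt{\Delta-1}$: feeding only $d_v\le\Delta$ into the continued fraction at every level gives the weaker threshold $2\sqrt{\Delta}$ and misses by exactly one unit, and recovering that unit is precisely what forces the degree-sensitive bookkeeping above, exploiting that the degree of $v$ drops to at most $\Delta-1$ once $v$ is deleted. (Alternatively one could pass through Godsil's path-tree identity and quote that a tree of maximum degree $\Delta$ has spectral radius below $2\sqrt{\Delta-1}$, but the continued-fraction estimate is self-contained.) Finally, a short argument at $x=2\sqrt{\Delta-1}$ itself — automatic here since $2(\Delta-1)>\Delta$ for $\Delta\ge 3$, and covered by the $\Delta=2$ computation otherwise — turns the closed interval into the open one claimed.
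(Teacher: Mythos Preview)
The paper does not prove this theorem at all: it is stated as a classical result of Heilmann and Lieb and simply cited, then used as a tool in later sections. So there is no ``paper's own proof'' to compare against.

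That said, your argument is a correct and essentially standard proof of the Heilmann--Lieb theorem. The interlacing induction for real-rootedness is the usual one (as in Godsil's treatment), and facts (i) and (ii) are exactly the right closure properties to isolate. For the root bound, your continued-fraction bookkeeping with $A(x)$ solving $A^{2}-xA+(\Delta-1)=0$ and the degree-sensitive bounds $f_{k}(x)=(x-k/A)^{-1}$ is precisely the mechanism that recovers the sharp constant $2\sqrt{\Delta-1}$ rather than $2\sqrt{\Delta}$; the key observation that a neighbour of $v$ has degree at most $\Delta-1$ in $G-v$ is what makes $f_{\Delta-1}=1/A$ self-reproducing. The only loose end is the $\Delta=2$ case, which you defer to an explicit computation: for paths and cycles the matching polynomial is a (product of) Chebyshev-type polynomial(s) whose zeros are $2\cos\theta$ with $\theta\in(0,\pi)$, so the strict bound $|x|<2$ indeed holds; it is worth saying this explicitly since the statement claims the open interval.
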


Let $\mu(G,x)=\prod_{i=1}^n(x-\alpha_i)$. Since it is an even or odd function we have
$$\mu(G,x)=x^c\prod_{\alpha_i>0}(x-\alpha_i)(x+\alpha_i),$$
where $c$ is the multiplicity of $0$ as a zero. Then
$$M_G(\lambda)=\prod_{\alpha_i>0}(1+\alpha_i^2\lambda).$$
It is convenient to introduce the notation $\gamma_i=\alpha_i^2$ for $\alpha_i>0$. Then $M_G(\lambda)=\prod (1+\gamma_i\lambda)$.
We will also need 
$$a_k(G)=\frac{1}{v(G)}\sum_{i=1}^{\nu} \gamma_i^k.$$
Unfortunately, in the literature many results are in terms of the quantities
$$s_k(G)=\frac{1}{v(G)}\sum_{i=1}^n \alpha_i^k.$$
Clearly, $a_k(G)=s_{2k}(G)/2$. The term $1/2$ comes from the fact that the non-zero roots of $\mu(G,x)$ come in pairs $\pm \alpha_i$. To understand the quantities $a_k(G)$ and $s_k(G)$ we need the concept of path-tree. 

\begin{Def} Let $G$ be graph with a given vertex $u$. The \textit{path-tree} $T(G,u)$ is defined as follows. The vertices of $T(G,u)$ are the paths in $G$ which start at the  vertex $u$ and two paths joined by an edge if one of them is a one-step extension of the other.
\end{Def}

\begin{figure}
\begin{center}
\includegraphics[scale=0.16]{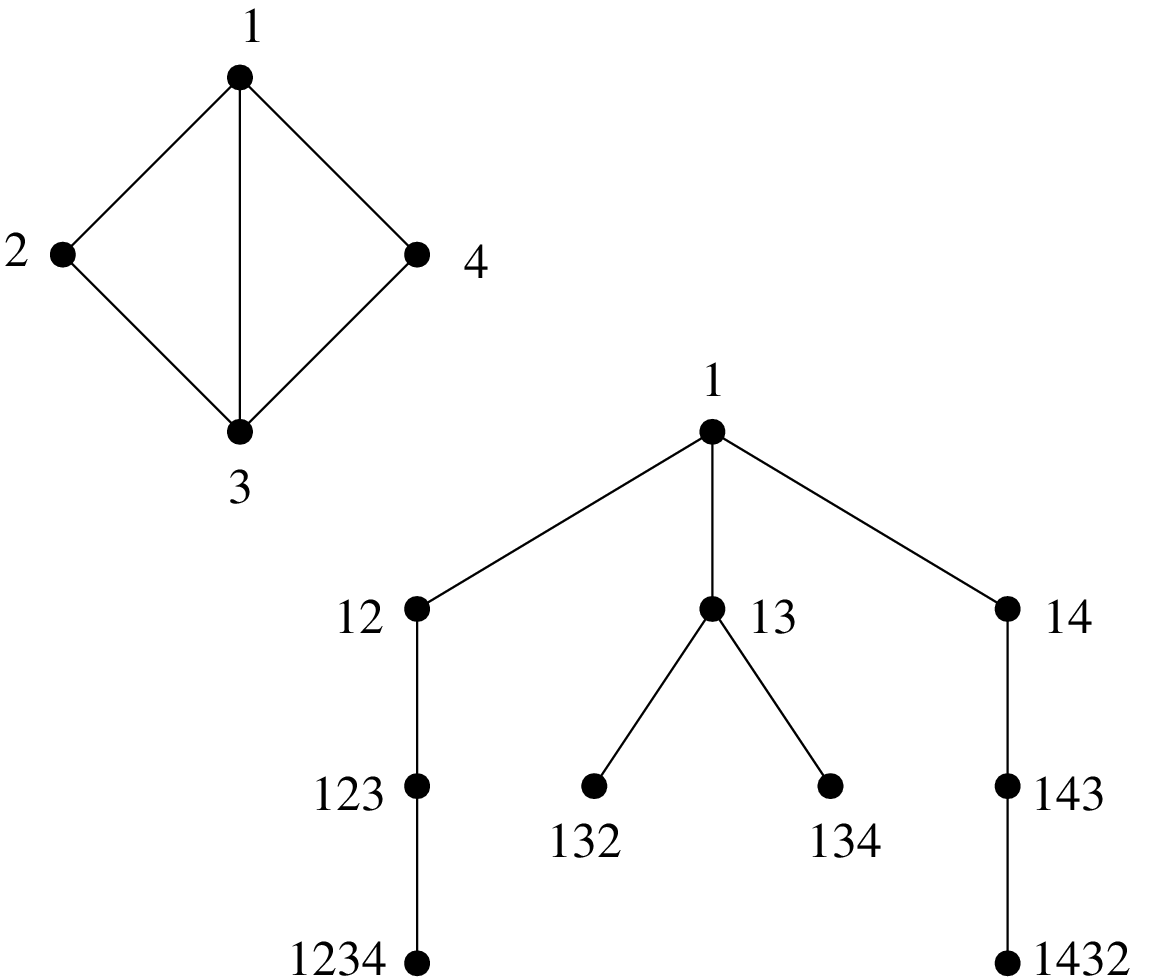}
\caption{A path-tree from the vertex $1$.}  
\end{center}
\end{figure}

It turns out that $s_k(G)$ counts certain walks called tree-like walks.

\begin{Def}
A closed tree-like walk of length $\ell$ is a closed walk on $T(G,u)$ of length $\ell$ starting and ending at the vertex $u$.
\end{Def}

Note that a priory the tree-like walk is on the path-tree, although one can make a correspondence with certain walks on the graph itself. Indeed, a walk in the tree $T(G,u)$ from $u$ can be imagined as follows. Suppose that in the graph $G$ a worm is sitting at the vertex $u$ at the beginning. Then at each step the worm can either grow or pull back its head. When it grows it can move its head to a neighboring unoccupied vertex while keeping its tail at vertex $u$.  At each step the worm occupies a path in the graph $G$. A closed walk in the tree $T(G,u)$ from $u$ to $u$ corresponds to the case when at the final step the worm occupies only vertex $u$.  

After this long introduction to tree-like walks the following theorem is not surprising.

\begin{Lemma}[Godsil \cite{godsil1993algebraic}] \label{tree-like walks}
The number of all closed tree-like walks of length $\ell$ is exactly $s_{\ell}(G)$.
\end{Lemma}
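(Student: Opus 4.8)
The plan is to pass to generating functions and exploit the self-similar structure of the path-tree. For a vertex $u\in V(G)$ set $F_u(G,x)=\mu(G-u,x)/\mu(G,x)$, a rational function whose Laurent expansion at infinity is $x^{-1}+O(x^{-2})$. On the combinatorial side let $w_\ell(G,u)$ be the number of closed tree-like walks of length $\ell$ based at $u$ and put $W_u(G,x)=\sum_{\ell\ge 0}w_\ell(G,u)\,x^{-\ell-1}$; since the path-tree $T(G,u)$ is a finite tree (all paths in $G$ have bounded length), $W_u(G,x)$ is precisely the generating function of the diagonal resolvent entry $\big((xI-A_{T(G,u)})^{-1}\big)_{\hat u\hat u}$, where $\hat u$ denotes the root of $T(G,u)$, i.e. the one-vertex path at $u$. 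The goal is to show $F_u(G,x)=W_u(G,x)$ for all $G$ and $u$, and then to sum over $u$.

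First I would recall the two standard identities for the matching polynomial: the vertex recursion
$$\mu(G,x)=x\,\mu(G-u,x)-\sum_{v\sim u}\mu(G-u-v,x),$$
and the differentiation formula $\mu'(G,x)=\sum_{u\in V(G)}\mu(G-u,x)$. Dividing the first by $\mu(G,x)$ and writing $\mu(G-u-v,x)/\mu(G,x)=F_v(G-u,x)\,F_u(G,x)$ gives
$$F_u(G,x)=\frac{1}{\,x-\sum_{v\sim u}F_v(G-u,x)\,},$$
with base case $F_u(G,x)=x^{-1}$ when $u$ is isolated in $G$. On the other hand, the subtree of $T(G,u)$ hanging below a child $uv$ of the root is canonically isomorphic, as a rooted tree, to $T(G-u,v)$ (a path $u,v,w,\dots$ in $G$ corresponds to the path $v,w,\dots$ in $G-u$). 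Hence the first-return decomposition of closed walks on a tree at its root yields exactly the same recursion for $W_u(G,x)$ and the same base case. Induction on $v(G)$ then gives $F_u(G,x)=W_u(G,x)$; equivalently, $w_\ell(G,u)$ is the coefficient of $x^{-\ell-1}$ in $\mu(G-u,x)/\mu(G,x)$.

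Finally I would sum over all base vertices. Using the differentiation formula and $\mu(G,x)=\prod_{i=1}^n(x-\alpha_i)$,
$$\sum_{u\in V(G)}W_u(G,x)=\frac{\sum_{u}\mu(G-u,x)}{\mu(G,x)}=\frac{\mu'(G,x)}{\mu(G,x)}=\sum_{i=1}^n\frac{1}{x-\alpha_i}=\sum_{\ell\ge 0}\Big(\sum_{i=1}^n\alpha_i^\ell\Big)x^{-\ell-1}.$$
Matching the coefficient of $x^{-\ell-1}$ gives $\sum_{u\in V(G)}w_\ell(G,u)=\sum_{i=1}^n\alpha_i^\ell=v(G)\,s_\ell(G)$, which is the content of the lemma, the division by $v(G)$ being the normalization built into the definition of $s_\ell(G)$. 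The step I expect to be the main obstacle is the identification $F_u=W_u$: one must carefully justify that the resolvent expansion of $\big((xI-A_{T(G,u)})^{-1}\big)_{\hat u\hat u}$ is literally the walk-counting series, and that the first-return decomposition of walks on the path-tree reproduces the displayed continued-fraction recursion term by term, including checking that the various $F$'s and $W$'s are compared as honest formal Laurent series. An alternative to this step is to invoke $\mu(T,x)=\det(xI-A_T)$ for forests $T$ together with Godsil's path-tree identity $\mu(G-u,x)/\mu(G,x)=\mu(T(G,u)-\hat u,x)/\mu(T(G,u),x)$, but that replaces the elementary recursion by two further black boxes.
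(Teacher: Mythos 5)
Your argument is correct, and it is essentially the standard proof of this result; the paper itself offers no proof at all, simply citing Godsil's book, so there is nothing internal to compare against. Your two key identities (the vertex deletion recursion for $\mu$ and $\mu'(G,x)=\sum_u\mu(G-u,x)$) are both valid, the continued-fraction recursion $F_u(G,x)=\bigl(x-\sum_{v\sim u}F_v(G-u,x)\bigr)^{-1}$ follows exactly as you say, and the first-return decomposition on the path-tree gives the identical recursion for $W_u$ because the branch of $T(G,u)$ below the child $uv$ is canonically $T(G-u,v)$ and removing the root disconnects the branches. The inversion $\bigl(x-\sum_v F_v\bigr)^{-1}$ is legitimate as a formal Laurent series since each $F_v=x^{-1}+O(x^{-2})$, so the induction closes. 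Your final summation and the remark about the $1/v(G)$ normalization hidden in the paper's definition of $s_\ell(G)$ correctly reconcile your count with the literal statement of the lemma. Godsil's own route is the alternative you mention (the divisibility identity $\mu(G-u,x)/\mu(G,x)=\mu(T(G,u)-\hat u,x)/\mu(T(G,u),x)$ plus $\mu(T,x)=\det(xI-A_T)$ for forests); your version trades those two black boxes for an elementary induction, which is a reasonable exchange.
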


Note that we can introduce $s_k(\mathbb{T}_d)$ this way: this is simply the number of closed walks from a root vertex $u$ of the infinite $d$--regular tree of length $k$. Then $a_k(\mathbb{T}_d)=s_{2k}(\mathbb{T}_d)/2$.

An immediate corollary of Lemma~\ref{tree-like walks} is the following lemma. A variant of this statement already appeared in the work of McKay and Wanless \cite{mckay1998maximising}.

\begin{Lemma} \label{comparison} For every $d$--regular graph $G$ and $k\geq 1$ we have 
$$a_k(K_{d+1})\leq a_k(G)\leq a_k(\mathbb{T}_d).$$
\end{Lemma}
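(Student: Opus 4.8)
The plan is to work entirely on the side of tree-like walks, using Lemma~\ref{tree-like walks} together with the identity $a_k(G)=s_{2k}(G)/2$, and to deduce both inequalities from one monotonicity principle for rooted trees. Write $W_\ell(H,u)$ for the number of closed walks of length $\ell$ in the path-tree $T(H,u)$ that start and end at the root $u$; by Lemma~\ref{tree-like walks} we have $s_\ell(H)=\frac{1}{v(H)}\sum_{u\in V(H)}W_\ell(H,u)$. The key elementary observation is that if $\phi\colon S\to T$ is an injective graph homomorphism of rooted trees carrying the root of $S$ to the root of $T$, then every closed length-$\ell$ walk from the root of $S$ is sent to a distinct closed length-$\ell$ walk from the root of $T$, so the number of the former is at most the number of the latter. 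Thus it suffices to produce, for every $d$-regular $G$ and every $u\in V(G)$, rooted-tree embeddings $T(K_{d+1},w)\hookrightarrow T(G,u)\hookrightarrow \mathbb{T}_d$ (with $w$ any vertex of $K_{d+1}$ and $\mathbb{T}_d$ rooted at any vertex): then $W_\ell(K_{d+1},w)\le W_\ell(G,u)\le W_\ell(\mathbb{T}_d,\mathrm{root})$ for every $u$, and averaging over $u$ with normalizing factor $v(G)$, using that $K_{d+1}$ is vertex-transitive (so $W_\ell(K_{d+1},w)=s_\ell(K_{d+1})$) and that $W_\ell(\mathbb{T}_d,\mathrm{root})=s_\ell(\mathbb{T}_d)$ as noted after Lemma~\ref{tree-like walks}, we obtain $s_\ell(K_{d+1})\le s_\ell(G)\le s_\ell(\mathbb{T}_d)$ for every even $\ell$. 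Taking $\ell=2k$ and dividing by $2$ gives the claim.

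Both embeddings are built greedily, level by level, and the only input needed is a bookkeeping bound on the number of children of a vertex of a path-tree. A vertex of $T(G,u)$ is a path $P=(u=x_0,x_1,\dots,x_j)$, and its children are the one-step extensions, i.e.\ the neighbours of $x_j$ not lying on $P$; when $j\ge 1$, the vertex $x_{j-1}$ is such a neighbour and at most $\min(j,d)$ of $x_0,\dots,x_{j-1}$ are neighbours of $x_j$, so the number of children lies between $\max(0,d-j)$ and $d-1$, while the root has exactly $d$ children. In $\mathbb{T}_d$ the root has $d$ children and every other vertex has $d-1$, so one may map the root of $T(G,u)$ to the root of $\mathbb{T}_d$ and then inductively send the children of an already-placed vertex injectively into the children of its image — possible at each step because every vertex of $T(G,u)$ has no more children than the corresponding vertex of $\mathbb{T}_d$. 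For $K_{d+1}$ a path of length $j$ uses $j+1$ of the $d+1$ vertices and hence extends in exactly $\max(0,d-j)$ ways, so $T(K_{d+1},w)$ is precisely the rooted tree in which every vertex at depth $j$ has exactly $\max(0,d-j)$ children; since every depth-$j$ vertex of $T(G,u)$ has \emph{at least} $\max(0,d-j)$ children, the same greedy construction embeds $T(K_{d+1},w)$ into $T(G,u)$.

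I expect the main (though modest) obstacle to be the careful verification of the children-count inequality in all regimes — in particular paths of length exceeding $d$, the special role of the root, and checking that the level-by-level greedy matching of children really assembles into a well-defined injective homomorphism rather than merely matching degrees — along with pinning down the normalization by $v(G)$ and the vertex-transitivity reduction for $K_{d+1}$. No tool beyond Lemma~\ref{tree-like walks} should be required.
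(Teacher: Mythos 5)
Your proof is correct and follows essentially the same route as the paper: both arguments embed the path-tree of $K_{d+1}$ into $T(G,u)$ and $T(G,u)$ into $\mathbb{T}_d$ via the children-count bounds (a path on $r$ vertices extends in at least $d+1-r$ ways, with equality for $K_{d+1}$, and in at most $d-1$ ways off the root), and then invoke Lemma~\ref{tree-like walks}. Your write-up merely makes explicit the averaging over root vertices and the monotonicity of closed-walk counts under rooted-subtree inclusion, which the paper leaves implicit.
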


\begin{proof}
The path-tree of the complete graph is a subtree of the path-tree of any regular graph. Indeed, if we have  a path $P$ in $G$ consisting of $r$ vertices, then we can extend it to at least $d+1-r$ ways to a path on $r+1$ vertices. In $K_{d+1}$ we have equality here. 

Similarly, the path-tree of any regular graph is a subtree of the infinite $d$--regular tree.

\end{proof}

\noindent If $d=3$ we have the following tables. The graphs $\mathrm{DN}_3$ and $\mathrm{DN}_2$ are defined in Section~\ref{covers and necklaces}.
\bigskip 

$$\begin{array}{|c|c|c|c|c|c|c|c|c|c|c|} \hline
k & 1 & 2 & 3 & 4 & 5 & 6 & 7 & 8 & 9 & 10 \\ \hline
2a_k(K_4) & 3 & 15 & 81 & 441 & 2403 & 13095 & 71361 & 388881 & 2119203 & 11548575 \\ \hline  
2a_k(\mathbb{T}_3) & 3 & 15 & 87 & 543 & 3543 & 23823 & 163719 & 1143999 & 8099511 & 57959535 \\ \hline
2a_k(\mathrm{DN}_3) & 3 & 15 & 84 & 493 & 2973 & 18261 & 113676 & 714849 & 4530843 & 28897155 \\ \hline
2a_k(\mathrm{DN}_2) & 3 & 15 & 84 & 493 & 2973 & 18255 & 113494 & 711673 & 4488663 & 28422175 \\ \hline
\end{array}$$
\bigskip

\noindent It is also clear from path-tree approach that if we know sufficiently large neighbors of the vertices, then we can determine $a_k(G)$. In other words, $a_k(G)$ can be expressed by small subgraph counts. We will need a few of these expressions.

\begin{Lemma}[Wanless \cite{wanless2010counting}] \label{densities}
For a $d$--regular graph $G$ we have\\
(a) $a_1(G)=a_1(\mathbb{T}_d)$,\\
(b) $a_2(G)=a_2(\mathbb{T}_d)$,\\
(c) $a_3(G)=a_3(\mathbb{T}_d)-3\rho(C_3,G)$,\\
(d) $a_4(G)=a_4(\mathbb{T}_d)-24(d-1)\rho(C_3,G)-4\rho(C_4,G)$,\\
(e) $a_5(G)=a_5(\mathbb{T}_d)-135(d-1)^2\rho(C_3,G)-40(d-1)\rho(C_4,G)-5\rho(C_5,G)+20\rho(D,G)$.
\end{Lemma}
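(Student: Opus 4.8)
The plan is to argue entirely with tree-like walks. By the identities recorded just before the statement, $2a_k(G)=s_{2k}(G)$, and by Lemma~\ref{tree-like walks} we have $s_{2k}(G)=\frac{1}{v(G)}\sum_{u\in V(G)}t_{2k}(u)$, where $t_{2k}(u)$ denotes the number of closed walks of length $2k$ based at the root of the path-tree $T(G,u)$. Such a walk reaches depth at most $k$, and it is determined as a combinatorial object once one knows the number of children of every path-tree vertex it can meet; moreover only the children-counts at depth $\le k-1$ can influence it, since at depth $k$ there is no remaining budget for a further up-step. For a path $u=v_0,v_1,\dots,v_r$ in $G$ with $r\ge 1$ this number of children equals $d-1-c(P)$, where $c(P)=\#\{\,0\le j\le r-2:\ v_j\sim v_r\,\}$; each index counted by $c(P)$ is a chord $v_jv_r$, which together with $v_j,v_{j+1},\dots,v_r$ forms a cycle of length $r-j+1$, lying between $3$ and $r+1\le k$. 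Hence $t_{2k}(u)$ depends only on the configuration of cycles of length $\le k$ through $u$ and on how these cycles overlap; in particular $t_{2k}(u)=s_{2k}(\mathbb{T}_d)$ when $u$ meets no such short cycle. Parts (a) and (b) follow at once: a simple graph has no cycle of length $\le 2$, so for $k\in\{1,2\}$ the relevant truncations of $T(G,u)$ and of the path-tree of $\mathbb{T}_d$ are isomorphic.

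For $k\in\{3,4,5\}$ I would write $t_{2k}(u)=s_{2k}(\mathbb{T}_d)+\mathrm{(defect)}$ and expand the defect by inclusion–exclusion over the chords that occur along the walk. Each term is indexed by a collection of chords realizable along a single root path of length $\le k-1$; the subgraph these chords span is a small graph $H$ through $u$, and the term contributes $\pm(\mathrm{const})\cdot(d-1)^{t}\cdot\#\{H\text{ through }u\text{ in that position}\}$, the factor $(d-1)^{t}$ coming from the freedom the leftover walk-length budget has to wander into otherwise unconstrained branches of the tree. For a single chord, $H$ is a cycle $C_3$ (first felt at depth $2$), $C_4$ (depth $3$) or $C_5$ (depth $4$) — and crucially not $C_6$, since that would be detected only at depth $5=k$, which is irrelevant for length-$2k$ walks. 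The only surviving multi-chord pattern for $k=5$ is two chords along a depth-$3$ path forcing $v_3\sim v_1$ and $v_3\sim u$, i.e. a diamond $D$ with $u$ at one of its degree-$2$ vertices; this contributes nothing for $k\le 4$ because the children-deficit $c(P)$ at a single path-tree vertex is \emph{linear} in the chords present, so at first order the effect of a diamond is already the sum of the effects of its two triangles and its $4$-cycle, and a genuine diamond correction appears only when a walk is long enough to interact with the diamond at two different path-tree vertices (which first happens at length $10$). One also checks that within depth $k-1\le 4$ no configuration of two vertex-disjoint triangles, or two triangles sharing only a vertex, etc., can block a single length-$2k$ walk for $k\le 5$, so no further terms arise.

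With the list of surviving graphs fixed, each coefficient is a finite computation: evaluate $s_{2k}$ on the fixed small graph $H\in\{C_3,C_4,C_5,D\}$ — directly by counting tree-like walks, or from $\mu(H,x)$ — subtract off the contributions already produced by the lower-complexity subgraphs of $H$ (for $D$, its two triangles and its one $4$-cycle), and read off the remainder; this yields the universal constants $3,24,4,135,40,5$ and $20$ together with the displayed powers of $(d-1)$. Finally one passes from $\frac{1}{v(G)}\sum_u(\cdots)$ to subgraph densities via $\sum_{u}\#\{C_j\text{ through }u\}=j\,N(C_j,G)$ and the analogous count for the diamond, and divides by $2v(G)$ to go from $s_{2k}$ to $a_k$; this gives exactly formulas (c)–(e).

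The conceptual skeleton is short; the real work, and the step most prone to error, is the bookkeeping of the second paragraph: exhaustively listing the chord-configurations realizable within depth $k-1\le 4$, verifying that everything except $C_3,C_4,C_5$ and the diamond cancels at first order, confirming that the diamond contributes nothing to (c)–(d) and exactly $+20\rho(D,G)$ to (e), and pinning down all seven constants. Useful independent checks are available: Lemma~\ref{comparison} lets one test the formulas against $K_{d+1}$ and $\mathbb{T}_d$, and the displayed tables for $d=3$ give numerical confirmation, e.g. $2a_5(K_4)=3543-2\bigl(135\cdot4\cdot1+40\cdot2\cdot\tfrac{3}{4}-20\cdot\tfrac{3}{2}\bigr)=2403$.
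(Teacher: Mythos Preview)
The paper does not prove this lemma; it is stated with attribution to Wanless and then used as a black box. So there is no proof in the paper to compare against. Your approach --- interpret $2a_k=s_{2k}$ via closed tree-like walks, view $T(G,u)$ as $\mathbb{T}_d$ with certain branches deleted, and expand the walk-count deficit by inclusion--exclusion over those deleted (``missing'') edges --- is the natural one and is correct in outline; it is essentially the mechanism behind Wanless's computations.

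Two places in the write-up need care before the bookkeeping can be trusted. First, the relevant local data at $u$ is not ``cycles through $u$'': a chord $v_jv_r$ with $j\ge 1$ at the path-tree vertex $(v_0,\dots,v_r)$ gives a lollipop (the cycle $v_j\cdots v_r$ together with the handle $v_0\cdots v_j$), and after summing over $u$ and over rooted placements these handles are precisely what generate the factors $(d-1)^t$ in the coefficients. The identity $\sum_u\#\{C_j\ \text{through}\ u\}=j\,N(C_j,G)$ captures only the handle-length-zero contributions. Second, for $k=5$ the surviving two-chord terms are not all ``along a single depth-$3$ root path with $u$ at a degree-$2$ vertex of the diamond''. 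A closed walk of length $10$ can also use two missing edges sitting at two distinct depth-$2$ path-tree vertices with a common depth-$1$ ancestor (the paths $u,a,b$ and $u,a,c$, each closing a triangle back to $u$), or one at depth $2$ and one at depth $3$ along the same branch. These further patterns again force a diamond, but rooted at a degree-$3$ vertex, and they do contribute to the constant; omitting them would give the wrong coefficient in (e). Once the full list of length-$\le 10$ two-chord patterns is enumerated the computation goes through, and your numerical check against $K_4$ is a good safeguard that the constants have been assembled correctly.
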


We will  only use the first $4$ claims in the proof of Theorem~\ref{main}. So we will mainly deal with triangles and $4$-cycles.

\subsection{Approaches}

For general $d$ we will do the most basic approach one can use: we write up the Taylor-series of $\frac{1}{v(G)}\ln M_G(\lambda)$ and $\frac{1}{v(K_{d+1})}\ln M_{K_{d+1}}(\lambda)$, and compare them. As we will see shortly, this can be done if $|\lambda \gamma_i|<1$. Since $\gamma_i\in [0,4(d-1)]$ by the Heilmann-Lieb theorem this approach works if $|\lambda|<\frac{1}{4(d-1)}$. The actual computation is the following.

\begin{align*}
\frac{1}{v(G)}\ln M_G(\lambda)&=\frac{1}{v(G)}\ln \left(\prod_{i=1}^{\nu}(1+\gamma_i\lambda)\right)\\
&=\frac{1}{v(G)}\sum_{i=1}^{\nu}\ln(1+\gamma_i\lambda)\\
&=\frac{1}{v(G)}\sum_{i=1}^{\nu}\sum_{k=1}^{\infty}(-1)^{k-1}\frac{(\gamma_i\lambda)^k}{k}\\
&=\frac{1}{v(G)}\sum_{k=1}^{\infty}\frac{(-1)^{k-1}\lambda^k}{k}\sum_{i=1}^{\nu}\gamma_i^k\\
&=\sum_{k=1}^{\infty}a_k(G)\frac{(-1)^{k-1}\lambda^k}{k}
\end{align*}
Note that this Taylor-series enable us to define $\frac{1}{v(\mathbb{T}_d)}\ln M_{\mathbb{T}_d}(\lambda)$ by
$$\frac{1}{v(\mathbb{T}_d)}\ln M_{\mathbb{T}_d}(\lambda)=\sum_{k=1}^{\infty}a_k(\mathbb{T}_d)\frac{(-1)^{k-1}\lambda^k}{k}.$$
In the proof of Theorem~\ref{main} we will truncate this sum as follows.
$$\frac{1}{v(G)}\ln M_G(\lambda)=\sum_{k=1}^{5}a_k(G)\frac{(-1)^{k-1}\lambda^k}{k}+\varepsilon_G(\lambda).$$
To bound $\varepsilon_G(\lambda)$ we will use the standard bound that for $x\in (0,1)$ we have
$0\leq \sum_{j=t}^{\infty}(-1)^{t-j}\frac{x^j}{j}\leq \frac{x^t}{t}$
since $\frac{x^j}{j}> \frac{x^{j+1}}{j+1}$. This gives that
\begin{align*}
-\varepsilon_G(\lambda)&=\sum_{k=6}^{\infty}a_k(G)\frac{(-1)^k\lambda^k}{k}
=\frac{1}{v(G)}\sum_{k=6}^{\infty}\sum_{i=1}^{\nu}(-1)^{k}\frac{(\gamma_i\lambda)^k}{k}
=\frac{1}{v(G)}\sum_{i=1}^{\nu}\sum_{k=6}^{\infty}(-1)^{k}\frac{(\gamma_i\lambda)^k}{k}\\
&\leq \frac{1}{v(G)}\sum_{i=1}^{\nu}\frac{(\gamma_i\lambda)^6}{6}\leq \frac{1}{v(G)}\sum_{i=1}^{\nu}\frac{(4(d-1)\lambda)^6}{6}\leq \frac{(4(d-1)\lambda)^6}{6}.
\end{align*}
Similarly, we get that $-\varepsilon_G(\lambda)\geq 0$.

The idea behind this truncation is that in order to compare $\frac{1}{v(G)}\ln M_G(\lambda)$ and \\ $\frac{1}{v(K_{d+1})}\ln M_{K_{d+1}}(\lambda)$ we only need to deal with triangles and $4$-cycles. We do not have to deal with diamonds and $5$-cycles, because we get $a_5(G)\geq a_5(K_{d+1})$ for free from Lemma~\ref{comparison}.

Note that the combination of Taylor-series with subgraph counts leads to a technique called Mayer expansion originated from statistical physics. This is exactly the method that was used in the paper of Davies, Perkins and Jenssen \cite{davies2020proof}. In this sense we do not do anything new, except that we truncate the Taylor-series later, and we have a slightly better understanding of the coefficients. (It is hard to read out the inequality $a_k(G)\geq a_k(K_{d+1})$ from the Mayer expansion.)

The limitation of this argument is that we clearly cannot go beyond $\frac{1}{4(d-1)}$ with $\lambda$. So for $d=3$ we will use another argument, where we use a polynomial approximation of $\ln(1+\lambda x)$ on the interval $(0,4(d-1))$ instead of the Taylor-expansion. 

\section{Proof of Theorem~\ref{main}} \label{general d}

In this section we prove  Theorem~\ref{main}. The following lemma gives some inequalities between the density of triangles and $4$-cycles.

\begin{Lemma} \label{main-lemma}
Let $G$ be a $d$--regular graph without a connected component isomorphic to $K_{d+1}$. Then\\
(a)  $$\rho(C_3,K_{d+1})-\rho(C_3,G)\geq \frac{1}{3}.$$
(b) $$\rho(C_4,K_{d+1})-\rho(C_4,G)\leq \frac{3(d-2)}{2}(\rho(C_3,K_{d+1})-\rho(C_3,G)).$$
\end{Lemma}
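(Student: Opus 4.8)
The plan is to count triangles and 4-cycles through each vertex and compare a general $d$-regular graph $G$ with the extremal value attained by a disjoint union of copies of $K_{d+1}$. For part (a), note that $\rho(C_3,K_{d+1}) = \binom{d+1}{3}/(d+1) = \binom{d}{2}/3 = d(d-1)/6$, which is the maximum possible number of triangles through a vertex (since a vertex of degree $d$ lies in at most $\binom{d}{2}$ triangles) divided by $3$ (each triangle counted at $3$ vertices). So $\rho(C_3,K_{d+1}) - \rho(C_3,G) \ge 0$ always, and I must show that if $G$ has no $K_{d+1}$ component then the deficit is at least $1/3$, i.e. that $G$ has at least one fewer triangle per $3$ vertices than the extremal configuration. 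The idea: if some vertex $v$ does not lie in $\binom{d}{2}$ triangles, its neighborhood $N(v)$ is not a clique, so at least one pair in $N(v)$ is a non-edge, costing at least $1$ triangle globally (hence $\ge 1/3$ in the $\rho$ normalization after summing over vertices — actually a single missing triangle already gives deficit $1/3 \cdot$ something, I need to be careful with the per-vertex averaging). If \emph{every} vertex lies in exactly $\binom{d}{2}$ triangles, then every neighborhood is a clique, which forces each component to be $K_{d+1}$, contradicting the hypothesis. The quantitative claim that the deficit is $\ge 1/3$ should follow because the total triangle count drops by at least $1$ from the extremal value, and the number of vertices involved in the relevant local argument can be bounded; I would aim to show $N(C_3, K_{d+1}\text{-union}) - N(C_3,G) \ge v(G)/3$ is too strong in general, so the right statement must be that \emph{per vertex} we lose, on average, and the $1/3$ comes from $3\bigl(\rho(C_3,K_{d+1}) - \rho(C_3,G)\bigr) \ge 1$ being the correct normalization — I will verify the constant by a careful local count.

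For part (b), the strategy is again a local (per-vertex) comparison, now relating the 4-cycle deficit to the triangle deficit. In $K_{d+1}$ the number of 4-cycles through a vertex is $3\binom{d+1}{2} \cdot$ (something) — concretely $\rho(C_4,K_{d+1}) = \binom{d+1}{4}\cdot 3/(d+1)$. The key observation is that a triangle-rich neighborhood automatically creates many 4-cycles: whenever two neighbors $x,y$ of $v$ are adjacent, the path $x\,v\,y$ together with any common neighbor of $x$ and $y$ other than $v$ closes a 4-cycle, and in $K_{d+1}$ there are $d-2$ such common neighbors. So every triangle "through an edge" contributes to $d-2$ four-cycles. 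When we delete edges from $N(v)$ to go from the clique to a general neighborhood, each deleted edge destroys triangles \emph{and} destroys the 4-cycles those triangles were generating. I would set up the inequality by writing both $N(C_4,\cdot)$ and $N(C_3,\cdot)$ as sums over vertices (or over edges) of local contributions, observe that the map "triangle $\mapsto$ the $\le d-2$ four-cycles sharing a path with it" accounts for the 4-cycle loss, and that the factor $3(d-2)/2$ absorbs the overcounting (each 4-cycle has $4$ edges / $2$ diagonals, and each triangle is counted at its $3$ edges — the $3/2 = 3 \cdot \tfrac12$ bookkeeping). The direction of the inequality (deficit in $C_4$ is \emph{at most} a multiple of deficit in $C_3$) reflects that $G$ might also have "extra" 4-cycles not coming from triangles, which only helps.

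The main obstacle I anticipate is the precise constant-chasing in part (b): one must account for 4-cycles in $G$ that are \emph{not} present in $K_{d+1}$-unions (long-range 4-cycles using vertices outside a single neighborhood), and show these do not spoil the inequality — they make $\rho(C_4,G)$ larger, hence the left side smaller, so morally they help, but one must make sure the local accounting of the 4-cycles that \emph{are} lost is tight enough to get the clean factor $3(d-2)/2$ rather than something weaker. A secondary subtlety is that for $d=2$ the right-hand side vanishes and $K_3$ is the only $d$-regular graph with a $K_{d+1}$ component, so the hypothesis handles the degenerate case; and for small $d$ ($d=3$, the main application) I would double-check the inequality directly. I would organize the write-up as: (1) express $\rho(C_3,\cdot)$ and $\rho(C_4,\cdot)$ via per-vertex local counts; (2) prove (a) by the neighborhood-clique dichotomy with the quantitative deficit; (3) prove (b) by charging lost 4-cycles to lost triangles edge-by-edge in neighborhoods, then summing.
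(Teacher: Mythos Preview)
Your overall strategy matches the paper's: introduce $t_v := \binom{d}{2} - e(N(v))$, express the triangle deficit as $\rho(C_3,K_{d+1})-\rho(C_3,G)=\frac{1}{3v(G)}\sum_v t_v$, and for (b) lower-bound $N(C_4,G)$ by counting only those $4$-cycles $v_1v_2v_3v_4$ with $v_2,v_3,v_4\in N(v_1)$, then charging each non-edge in $N(v_1)$ with at most $4(d-2)$ lost labeled triples. Your discussion of (b) is vague on the bookkeeping but the mechanism and the direction of the inequality are right.

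There is, however, a genuine gap in your part (a). You frame the dichotomy as: either \emph{every} $N(v)$ is a clique (forcing all components to be $K_{d+1}$), or \emph{some} vertex has a non-clique neighborhood --- and then you worry that a single missing triangle cannot produce a deficit of $1/3$ after dividing by $v(G)$. You even dismiss the bound $N(C_3,K_{d+1}\text{-union}) - N(C_3,G)\ge v(G)/3$ as ``too strong in general''. In fact that bound is exactly what holds, and the observation you are missing is this: in a $d$-regular graph, if a \emph{single} vertex $v$ has $N(v)$ a clique, then $\{v\}\cup N(v)$ already induces a $K_{d+1}$, and each of its $d+1$ vertices has all $d$ of its incident edges inside that clique, so it is an entire connected component. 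Hence the hypothesis forces $t_v\ge 1$ for \emph{every} vertex $v$, not just one; this gives $\sum_v t_v\ge v(G)$ and the deficit $\ge 1/3$ immediately. Once you have $\sum_v t_v$ as the common currency, both (a) and (b) go through exactly as the paper does them.
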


\begin{Rem}
We remark that much more precise results about the density of triangles and $4$-cycles were achieved by Harangi, see \cite{harangi2013density}.
\end{Rem}

\begin{proof} For a vertex $v\in V(G)$ let $N(v)$ be the set of neighbors, and $t_v=\binom{d}{2}-e(N(v))$, that is, the number of non-edges in the neighbor of $v$. Then
\begin{align*}
\frac{N(C_3,G)}{v(G)}&=\frac{1}{3v(G)}\sum_{v\in V(G)}e(N(v))\\
        &=\frac{1}{3v(G)}\sum_{v\in V(G)}\left(\binom{d}{2}-t_v\right)\\
				&=\frac{1}{3}\binom{d}{2}-\frac{1}{3v(G)}\sum_{v\in V(G)}t_v\\
				&=\frac{N(C_3,K_{d+1})}{v(K_{d+1})}-\frac{1}{3v(G)}\sum_{v\in V(G)}t_v.
\end{align*}
Hence
$$\frac{N(C_3,K_{d+1})}{v(K_{d+1})}-\frac{N(C_3,G)}{v(G)}=\frac{1}{3v(G)}\sum_{v\in V(G)}t_v.$$
Now the first claim follows immediately since $t_v\geq 1$ for every vertex $v$.

Let us prove part (b).
When we count the number of $4$-cycles we first choose a $v=v_1$, then $v_2,v_3,v_4\in N(v)$ in such a way that $(v_2,v_3)\in E(G)$ and $(v_3,v_4)\in E(G)$. First of all, we do not count at all those $4$--cycles this way, where $v_3$ is not a neighbor of $v$. In general, if $v_1v_2v_3v_4$ is a $4$-cycle, then we count it $8$ times if both $(v_1,v_3)\in E(G)$ and $(v_2,v_4)\in E(G)$, we count it exactly $4$ times if exactly one of them is an edge. Observe also that a missing edge in $N(v)$ prevent at most $4(d-2)$ labeled triples $(v_2,v_3,v_4)$: if $(v_2,v_3)\notin E(G)$, then we can choose $v_4$ in $d-2$ ways, and a non-edge also prevents $(v_3,v_4)\notin E(G)$
and we have to multiply it by $2$, because of the labeling. Hence
\begin{align*}
\frac{N(C_4,G)}{v(G)}&\geq \frac{1}{8v(G)}\sum_{v\in V(G)}(d(d-1)(d-2)-4t_v(d-2))\\
                     &=\frac{1}{8}d(d-1)(d-2)-\frac{d-2}{2v(G)}\sum_{v\in V(G)}t_v\\
										&=\frac{N(C_4,K_{d+1})}{v(K_{d+1})}-\frac{d-2}{2v(G)}\sum_{v\in V(G)}t_v.
\end{align*}
Then
$$\frac{N(C_4,K_{d+1})}{v(K_{d+1})}-\frac{N(C_4,G)}{v(G)}\leq \frac{d-2}{2v(G)}\sum_{v\in V(G)}t_v=\frac{3(d-2)}{2}\left(\frac{N(C_3,K_{d+1})}{v(K_{d+1})}-\frac{N(C_3,G)}{v(G)}\right).$$

\end{proof}

\begin{proof}[Proof of Theorem~\ref{main}]
We have
$$\frac{1}{v(G)}\ln M_G(\lambda)=\sum_{k=1}^5a_k(G)\frac{(-1)^{k-1}\lambda^k}{k}+\varepsilon_G(\lambda),$$
where
$$0\leq -\varepsilon_G(\lambda)\leq \frac{1}{6}(4(d-1)\lambda)^6.$$
Let $S=\frac{1}{v(G)}\ln M_G(\lambda)-\frac{1}{v(K_{d+1})}\ln M_{K_{d+1}}(\lambda)$. 
Then
\begin{align*}
S&=\sum_{k=1}^5(a_k(G)-a_k(K_{d+1}))\frac{(-1)^{k-1}\lambda^k}{k}+\varepsilon_G(\lambda)-\varepsilon_{K_{d+1}}(\lambda) \\
 &=(a_3(G)-a_3(K_{d+1}))\frac{\lambda^3}{3}-(a_4(G)-a_4(K_{d+1}))\frac{\lambda^4}{4}+(a_5(G)-a_5(K_{d+1}))\frac{\lambda^5}{5}+\varepsilon_G(\lambda)-\varepsilon_{K_{d+1}}(\lambda) \\
 &\geq (a_3(G)-a_3(K_{d+1}))\frac{\lambda^3}{3}-(a_4(G)-a_4(K_{d+1}))\frac{\lambda^4}{4}+\varepsilon_G(\lambda)-\varepsilon_{K_{d+1}}(\lambda)\\
 &\geq (a_3(G)-a_3(K_{d+1}))\frac{\lambda^3}{3}-(a_4(G)-a_4(K_{d+1}))\frac{\lambda^4}{4}-\frac{1}{6}(4(d-1)\lambda)^6\\
 &=\left(\rho(C_3,K_{d+1})-\rho(C_3,G)\right)\left(3\frac{\lambda^3}{3}-24(d-1)\frac{\lambda^4}{4}\right)
   -\left(\rho(C_4,K_{d+1})-\rho(C_4,G)\right)\cdot 4\frac{\lambda^4}{4}\\
 &\ -\frac{1}{6}(4(d-1)\lambda)^6\\
 &\geq \left(\rho(C_3,K_{d+1})-\rho(C_3,G)\right)\left(3\frac{\lambda^3}{3}-24(d-1)\frac{\lambda^4}{4}-\frac{3(d-2)}{2}\cdot 4\cdot \frac{\lambda^4}{4}\right)  -\frac{1}{6}(4(d-1)\lambda)^6\\
 &\geq \frac{1}{3}\left(3\frac{\lambda^3}{3}-24(d-1)\frac{\lambda^4}{4}-\frac{3(d-2)}{2}\cdot 4\cdot \frac{\lambda^4}{4}\right)  -\frac{1}{6}(4(d-1)\lambda)^6\\
 &\geq \frac{1}{6}( 2\lambda^3-15d\lambda^4-(4d\lambda)^6).
\end{align*}
In the first step we used that $a_5(G)\geq a_5(K_{d+1})$, then we used that\\  $\varepsilon_G(\lambda)\geq -\frac{1}{6}(4(d-1)\lambda)^6$ and $-\varepsilon_{K_{d+1}}(\lambda)\geq 0$. Then we expressed  $a_3(G)$ and $a_4(G)$ in terms of subgraph densities via Lemma~\ref{densities}. Then we used part (b) of Lemma~\ref{main-lemma}, and then part (a) of the same lemma.
The last term is positive if $\lambda^{-1}>16d^2$ since then $\lambda^3>(4d\lambda)^6$ and $\lambda^3>15d\lambda^4$.

\end{proof}

\subsection{Negative values}

In this section we study the problem for negative $\lambda$.

\begin{Th} Let $-\frac{1}{4(d-1)}\leq \lambda \leq 0$. Then
$$\frac{1}{v(\mathbb{T}_d)}\ln M_{\mathbb{T}_d}(\lambda)\leq \frac{1}{v(G)}\ln M_G(\lambda)\leq \frac{1}{v(K_{d+1})}\ln M_{K_{d+1}}(\lambda).$$
\end{Th}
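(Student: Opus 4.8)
The plan is to reuse the Taylor-series machinery from the proof of Theorem~\ref{main}, but now exploit the fact that for $\lambda<0$ all the terms in the expansion work in our favour. Write $t=-\lambda\in[0,\tfrac1{4(d-1)}]$. Since $\gamma_i\in[0,4(d-1)]$ by Heilmann--Lieb, we have $|\gamma_i\lambda|<1$ for every nonzero root, so
$$\frac{1}{v(G)}\ln M_G(\lambda)=\sum_{k=1}^{\infty}a_k(G)\frac{(-1)^{k-1}\lambda^k}{k}=-\sum_{k=1}^{\infty}\frac{a_k(G)}{k}t^k,$$
which converges absolutely; the same formula with $G$ replaced by $\mathbb{T}_d$ or $K_{d+1}$ is valid (for $\mathbb{T}_d$ this is the definition given in the text, and the series converges because $a_k(\mathbb{T}_d)\le (4(d-1))^k$). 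The key point is that every coefficient $-a_k(G)/k$ is negative, so the comparison is now \emph{term-by-term monotone} in the $a_k$'s.

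The first step is to invoke Lemma~\ref{comparison}: for every $d$-regular $G$ and every $k\ge1$,
$$a_k(K_{d+1})\le a_k(G)\le a_k(\mathbb{T}_d).$$
The second step is simply to multiply these inequalities by the negative weights $-t^k/k$ (valid because $t\ge0$) and sum over $k\ge1$. Multiplying by a negative number reverses the inequalities, so
$$-\sum_{k\ge1}\frac{a_k(\mathbb{T}_d)}{k}t^k\ \le\ -\sum_{k\ge1}\frac{a_k(G)}{k}t^k\ \le\ -\sum_{k\ge1}\frac{a_k(K_{d+1})}{k}t^k,$$
that is,
$$\frac{1}{v(\mathbb{T}_d)}\ln M_{\mathbb{T}_d}(\lambda)\ \le\ \frac{1}{v(G)}\ln M_G(\lambda)\ \le\ \frac{1}{v(K_{d+1})}\ln M_{K_{d+1}}(\lambda),$$
which is exactly the claimed chain of inequalities. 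The interchange of the (convergent, absolutely summable) sums with the inequality is harmless since we are only adding up countably many valid term-wise inequalities.

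I do not expect any serious obstacle here; the only things to be careful about are (i) checking that the series for $M_{\mathbb{T}_d}$ genuinely converges on the closed interval up to $-1/(4(d-1))$ — at the endpoint one uses $a_k(\mathbb{T}_d)\le (4(d-1))^k$ together with the $1/k$ to get a harmonic-type tail, which converges since the geometric factor is exactly $1$ there, so one may want to note that the bound $|\gamma_i\lambda|\le 1$ with equality only in the limit still gives convergence of $\sum t^k/k$ only for $t<1/(4(d-1))$ strictly, and handle the endpoint $\lambda=-1/(4(d-1))$ by a continuity/monotone-limit argument (the partial sums are monotone in $k$), and (ii) making sure the sign bookkeeping when multiplying by $(-1)^{k-1}\lambda^k/k=-t^k/k<0$ is done correctly. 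Neither point is substantive. In fact the argument shows more: the same term-by-term comparison gives that $\lambda\mapsto \frac1{v(G)}\ln M_G(\lambda)$ is, for $\lambda\le 0$, squeezed between the tree and the clique values, with no restriction beyond convergence of the series.
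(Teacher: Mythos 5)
Your argument is exactly the paper's proof: substitute $\lambda=-x$ with $x\ge 0$, expand each $\ln(1-\gamma_i x)$ to get $\frac{1}{v(G)}\ln M_G(\lambda)=-\sum_{k\ge 1}\frac{a_k(G)}{k}x^k$ with all coefficients negative, and apply the termwise bounds $a_k(K_{d+1})\le a_k(G)\le a_k(\mathbb{T}_d)$ from Lemma~\ref{comparison}. Your additional care about convergence at the endpoint $x=\frac{1}{4(d-1)}$ is a reasonable refinement that the paper glosses over, but it does not change the route.
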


\begin{proof}
Let $\lambda=-x$, then $0\leq x\leq \frac{1}{4(d-1)}$, and 
\begin{align*}
\frac{1}{v(G)}\ln M_G(-x)&=\frac{1}{v(G)}\sum_{i=1}^{v(G)/2}\ln(1-\gamma_i(G)x)\\
&=\frac{1}{v(G)}\sum_{i=1}^{\nu(G)}\sum_{k=1}^{\infty}\frac{-1}{k}(\gamma_i(G)x)^k\\
&=-\sum_{k=1}^{\infty}\frac{a_k(G)}{k}x^k.
\end{align*}
Now the claim follows from the inequalities $a_k(K_{d+1})\leq a_k(G)\leq a_k(\mathbb{T}_d)$.
\end{proof}

\begin{Rem}
We have
$$\frac{1}{v(\mathbb{T}_d)}\ln M(\mathbb{T}_d,\lambda)=\frac{1}{2}\ln S_d(\lambda),$$
where 
$$S_d(\lambda)=\frac{1}{\eta_{\lambda}^2}\left(\frac{d-1}{d-\eta_{\lambda}}\right)^{d-2}\ \ \ \mbox{and}\ \ \ \eta_{\lambda}=\frac{\sqrt{1+4(d-1)\lambda}-1}{2(d-1)\lambda}.$$

\end{Rem}

\section{3--regular graphs} \label{3-regular}

In this section we prove Theorem~\ref{3-reg main}. The main ingredient is the following technical lemma.

\begin{Th}
Assume that the polynomial $P(x)=c_0+c_1x+c_2x^2+c_3x^3+c_4x^4+c_5x^5$ satisfies that $(-1)^{i+1}c_i\geq 0$ for $3\leq i\leq 5$, and for all $x\in [0,A]$ we have
$$|\ln(1+x)-P(x)|\leq \varepsilon.$$
Furthermore, suppose that $\lambda\leq A/8$ and satisfies that
$$\frac{3}{2}c_3\lambda^3+27c_4\lambda^4-\varepsilon>0.$$
Then
$$\frac{1}{v(G)}\ln M_G(\lambda)> \frac{1}{4}\ln M_{K_4}(\lambda)$$
for every $3$-regular graph $G$, if $G$ is not disjoint union of $K_4$ graphs.
\end{Th}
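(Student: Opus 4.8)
The plan is to mimic the Taylor-series argument from the proof of Theorem~\ref{main}, but with the truncated Taylor polynomial replaced by the given approximating polynomial $P$, which is valid on the whole Heilmann--Lieb interval $(0, 4(d-1)) = (0, 8)$ rather than only near $0$. Concretely, for a $3$-regular graph $G$ with nonzero matching-polynomial squared-roots $\gamma_1, \dots, \gamma_\nu \in [0, 8]$, write
\begin{align*}
\frac{1}{v(G)} \ln M_G(\lambda) &= \frac{1}{v(G)} \sum_{i=1}^{\nu} \ln(1 + \gamma_i \lambda) = \frac{1}{v(G)} \sum_{i=1}^{\nu} P(\gamma_i \lambda) + \delta_G(\lambda)\\
&= \sum_{k=1}^{5} c_k \lambda^k a_k(G) + \delta_G(\lambda),
\end{align*}
where, since $\gamma_i \lambda \in [0, 8\lambda] \subseteq [0, A]$ by the hypothesis $\lambda \le A/8$, the error satisfies $|\delta_G(\lambda)| \le \frac{1}{v(G)} \sum_i \varepsilon = \varepsilon/2$ (the factor $1/2$ because $\nu \le v(G)/2$; I should double-check whether the paper's normalization forces a $1/2$ or not, and adjust the displayed hypothesis $\frac32 c_3\lambda^3 + 27 c_4 \lambda^4 - \varepsilon > 0$ accordingly). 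Note $c_0$ and $c_1, c_2$ contribute the same to $G$ and to $K_4$ up to the error terms? No — $c_0$ is a constant so it cancels in the difference below, and $a_1, a_2$ are the same for all $3$-regular graphs by Lemma~\ref{densities}(a),(b), so the $c_1, c_2$ terms cancel exactly.

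Next I would form $S = \frac{1}{v(G)} \ln M_G(\lambda) - \frac14 \ln M_{K_4}(\lambda)$ and expand the difference just as in the proof of Theorem~\ref{main}. The $c_1 \lambda a_1$ and $c_2 \lambda^2 a_2$ terms cancel since $a_1(G) = a_1(K_4)$ and $a_2(G) = a_2(K_4)$. This leaves
\[
S = \sum_{k=3}^{5} c_k \lambda^k \bigl(a_k(G) - a_k(K_4)\bigr) + \delta_G(\lambda) - \delta_{K_4}(\lambda).
\]
By Lemma~\ref{comparison}, $a_5(G) \ge a_5(K_4)$, and the sign condition $c_5 \le 0$ means $c_5 \lambda^5 (a_5(G) - a_5(K_4)) \ge 0$ for $\lambda > 0$, so that term can be dropped. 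For $k=3, 4$ I substitute the expressions from Lemma~\ref{densities}: $a_3(G) - a_3(K_4) = 3(\rho(C_3, K_4) - \rho(C_3, G))$ and $a_4(G) - a_4(K_4) = 24(d-1)(\rho(C_3,K_4) - \rho(C_3,G)) + 4(\rho(C_4,K_4) - \rho(C_4,G))$, with $d = 3$. Since $c_3 \ge 0$ and $c_4 \le 0$, and both differences of $\rho(C_3, \cdot)$ and $\rho(C_4, \cdot)$ are controlled by Lemma~\ref{main-lemma} (here one uses that $G$ is not a disjoint union of $K_4$'s so the lemma applies to each non-$K_4$ component and, via convexity/averaging, to $G$ itself), I can bound
\[
S \ge \bigl(\rho(C_3, K_4) - \rho(C_3, G)\bigr)\Bigl(3 c_3 \lambda^3 + 48 c_4 \lambda^4 \cdot (\ldots)\Bigr) + \text{(}C_4\text{ term, nonnegative direction)} - |\delta_G| - |\delta_{K_4}|,
\]
then apply Lemma~\ref{main-lemma}(b) to absorb the $\rho(C_4)$ contribution into the $\rho(C_3)$ one (this is where the factor $\frac{3(d-2)}{2} = \frac32$ for $d=3$ enters), and finally Lemma~\ref{main-lemma}(a), $\rho(C_3, K_4) - \rho(C_3, G) \ge \frac13$, to get a numerical lower bound of the shape $\frac13(3 c_3 \lambda^3 + (\text{coefficient}) c_4 \lambda^4) - \varepsilon$; arithmetic should convert this into the stated $\frac32 c_3 \lambda^3 + 27 c_4 \lambda^4 - \varepsilon > 0$ (note $27 = 3 \cdot \frac{48+6}{... }$-type bookkeeping — I'd track the constants carefully, using $c_4 \le 0$ so larger coefficient on $c_4$ makes the bound smaller). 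Since the hypothesis says this quantity is $> 0$, we get $S > 0$, which is the claim.

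The main obstacle I anticipate is bookkeeping the signs and the exact constants: one must be careful that $c_4 \le 0$ so that replacing the $\rho(C_4)$ contribution via Lemma~\ref{main-lemma}(b) moves the estimate in the correct (safe) direction, and that the combination $24(d-1) \cdot \frac14 + \frac{3(d-2)}{2} \cdot 4 \cdot \frac14$ at $d=3$ produces precisely the coefficient $27$ claimed in the hypothesis; similarly one must confirm the $\varepsilon$ (vs.\ $\varepsilon/2$) normalization matches the paper's convention for $a_k$. A secondary point requiring care is the reduction from a possibly disconnected $G$ to its components: Lemma~\ref{main-lemma} is stated for $G$ with no $K_{d+1}$ component, but a general $G$ not equal to a disjoint union of $K_4$'s may still have some $K_4$ components; one handles this by noting $\frac{1}{v(G)}\ln M_G = \sum_j \frac{v(G_j)}{v(G)} \cdot \frac{1}{v(G_j)} \ln M_{G_j}$ over components $G_j$, the $K_4$ components contribute exactly $\frac14 \ln M_{K_4}(\lambda)$ each, and at least one component is not $K_4$, so the strict inequality survives the averaging. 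Everything else is a direct transcription of the Theorem~\ref{main} argument with $P$ in place of the degree-$5$ Taylor truncation.
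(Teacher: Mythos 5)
Your overall skeleton is the right one and matches the paper's: approximate $\ln(1+x)$ by $P$ on $[0,A]\supseteq[0,8\lambda]$, pick up an error of $\varepsilon/2$ on each side (so $\varepsilon$ in total — your normalization worry resolves in favour of the stated hypothesis), cancel the $c_0,c_1,c_2$ terms, discard the $c_5$ term via Lemma~\ref{comparison} (note a sign slip: the hypothesis gives $c_5\geq 0$, not $c_5\leq 0$; you need $c_5\geq 0$ for the term to be droppable), and reduce to the $\rho_3,\rho_4$ densities via Lemma~\ref{densities}. Your remark about reducing to components is also sound and in fact addresses a point the paper glosses over.

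The genuine gap is in the final numerical step. Running your plan — Lemma~\ref{main-lemma}(b) to absorb the $C_4$ term and then Lemma~\ref{main-lemma}(a) — gives
$$S\geq (1-\rho_3)\bigl(3c_3\lambda^3+(48+6)c_4\lambda^4\bigr)-\varepsilon\geq \tfrac{1}{3}\bigl(3c_3\lambda^3+54c_4\lambda^4\bigr)-\varepsilon=c_3\lambda^3+18c_4\lambda^4-\varepsilon,$$
which is $\tfrac{2}{3}\bigl(\tfrac{3}{2}c_3\lambda^3+27c_4\lambda^4\bigr)-\varepsilon$. The stated hypothesis only yields that this exceeds $\tfrac{2}{3}\varepsilon-\varepsilon<0$, so your route does not close; it proves the theorem only under the strictly stronger assumption $c_3\lambda^3+18c_4\lambda^4>\varepsilon$ (strictly stronger because $c_4<0$ and the hypothesis already forces $\lambda<-\tfrac{1}{18}c_3/c_4$). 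The missing ingredient is that for $d=3$ the bound of Lemma~\ref{main-lemma}(a) is not sharp: a $3$--regular graph with no $K_4$ component satisfies $\rho_3\leq\tfrac{1}{2}$ (extremal for diamond necklaces), not merely $\rho_3\leq\tfrac{2}{3}$. The paper uses this sharper bound and does not use Lemma~\ref{main-lemma}(b) at all: it rewrites the lower bound exactly as
$$\tfrac{3}{2}c_3\lambda^3+27c_4\lambda^4-\varepsilon+\bigl(\tfrac{1}{2}-\rho_3\bigr)\bigl(3c_3\lambda^3+48c_4\lambda^4\bigr)-4c_4\rho_4\lambda^4,$$
observes that $3c_3\lambda^3+48c_4\lambda^4>0$ follows from the hypothesis, and drops the last two terms as nonnegative using $\rho_3\leq\tfrac{1}{2}$, $\rho_4\geq 0$ and $c_4\leq 0$. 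In other words, the constants $\tfrac{3}{2}$ and $27$ are calibrated to the extremal point $(\rho_3,\rho_4)=(\tfrac{1}{2},0)$, and without the improved triangle bound you cannot recover them.
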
 

\begin{Rem}
Note that for a fix $A$ the inequality $\frac{3}{2}c_3\lambda^3+27c_4\lambda^4-\varepsilon>0$ gives an interval $(\lambda_{\min}(A),\lambda_{\max}(A))$ for which this is satisfied. Unfortunately, this only gives the interval $(\lambda_{\min}(A),\min(A/8,\lambda_{\max}(A)))$
for which
$$\frac{1}{v(G)}\ln M_G(\lambda)> \frac{1}{4}\ln M_{K_4}(\lambda).$$
Later we will use a ladder of $A$'s to cover a longer interval with intervals \\ $(\lambda_{\min}(A),\min(A/8,\lambda_{\max}(A)))$. Since $0$ is never in this interval we will use  Theorem~\ref{main} to handle the case of small $\lambda$. 
\end{Rem}

\begin{proof}

\begin{align*}
\frac{1}{v(G)}\ln M_G(\lambda)&=\frac{1}{2n}\sum_{i=1}^n \ln(1+\gamma_i(G)\lambda)\\ 
&\geq \frac{1}{2n}\sum_{i=1}^n(P(\gamma_i(G)\lambda)-\varepsilon)\\
&=\frac{1}{2}c_0+c_1a_1(G)\lambda+c_2a_2(G)\lambda^2+c_3a_3(G)\lambda^3+c_4a_4(G)\lambda^4+c_5a_5(G)\lambda^5-\frac{\varepsilon}{2}.
\end{align*}
The inequality holds true since $\gamma_i(G)\lambda\leq 8\lambda\leq A$.
Similarly, we have
\begin{align*}
\frac{1}{4}\ln M_{K_4}(\lambda)&=\frac{1}{4}\sum_{i=1}^2 \ln(1+\gamma_i(K_4)\lambda)\\ 
&\leq \frac{1}{4}\sum_{i=1}^2(P(\gamma_i(K_4)\lambda)+\varepsilon)\\
&=\frac{1}{2}c_0+c_1a_1(K_4)\lambda+c_2a_2(K_4)\lambda^2+c_3a_3(K_4)\lambda^3+c_4a_4(K_4)\lambda^4+c_5a_5(K_4)\lambda^5+\frac{\varepsilon}{2}.
\end{align*}
Hence
$$\frac{1}{v(G)}\ln M_G(\lambda)-\frac{1}{4}\ln M_{K_4}(\lambda)\geq \ \ \ \ \ $$
$$\ \ \ \ \ \geq c_3(a_3(G)-a_3(K_4))\lambda^3+c_4(a_4(G)-a_4(K_4))\lambda^4+c_5(a_5(G)-a_5(K_4))\lambda^5-\varepsilon.$$
Since $c_3,c_5>0$ and $c_4<0$ we have that
$$\frac{1}{v(G)}\ln M_G(\lambda)-\frac{1}{4}\ln M_{K_4}(\lambda)\geq c_3(3-3\rho_3)\lambda^3+c_4(51-48\rho_3-4\rho_4)\lambda^4-\varepsilon.$$
Here
$$c_3(3-3\rho_3)\lambda^3+c_4(51-48\rho_3-4\rho_4)\lambda^4-\varepsilon=\frac{3}{2}c_3\lambda^3+27c_4\lambda^4-\varepsilon+\left(\frac{1}{2}-\rho_3\right)(3c_3\lambda^3+48c_4\lambda^4)-4c_4\rho_4\lambda^4.$$
Note that $3c_3\lambda^3+48c_4\lambda^4>0$ since $\frac{3}{2}c_3\lambda^3+27c_4\lambda^4-\varepsilon>0$ implies that $\frac{3}{2}c_3\lambda^3+27c_4\lambda^4>0$, and consequently
$$\lambda<-\frac{1}{18}\frac{c_3}{c_4}<-\frac{1}{16}\frac{c_3}{c_4}.$$
Since $\rho_3\leq 1/2$ we get that all terms are non-negative by the assumption on $\lambda$, whence
$$\frac{1}{v(G)}\ln M_G(\lambda)-\frac{1}{4}\ln M_{K_4}(\lambda)>0.$$
\end{proof}~\\
Theorem~\ref{main} gives that for $d=3$ we get that for $0<\lambda<\frac{1}{12^2}=0.006944444$ we have
$$\frac{1}{v(G)}\ln M_G(\lambda)\geq \frac{1}{4}\ln M_{K_4}(\lambda).$$
For a given $A$ let $P^{(k)}_A(x)$ be the best approximation of $\ln(1+x)$ on the interval $[0,A]$ with a degree $k$ polynomial with respect to the sup norm. This polynomial can be computed by Remez's algorithm. We will always choose $k=4$. The following table and the polynomials after the table give the necessary information to prove that
$$\frac{1}{v(G)}\ln M_G(\lambda)\geq \frac{1}{4}\ln M_{K_4}(\lambda)$$
holds true for every $3$--regular graph $G$ and $0<\lambda<0.3575$. 

\begin{table}[h!]
\begin{tabular}{|l|l|l|l|}
\hline
$A$    & $\lambda_{\min}(A)$    & $\lambda_{\max}(A)$  & $A/8$ \\ \hline
$0.2$ & $0.005568811878$ & $0.1034074863$ & $0.025$ \\ \hline
$0.5$ & $0.02277925697$  & $0.1461209526$ & $0.0625$ \\ \hline
$0.9$ & $0.05462386679$ & $0.1999613734$ & $0.1125$ \\ \hline
$1.4$ & $0.1046154581$ & $0.2614267492$ & $0.175$ \\ \hline
$1.8$  & $0.1519465525$ & $0.3051509460$ & $0.225$                         \\ \hline
$2.3$  & $0.2219072063$ & $0.3504592779$ & $0.2875$                        \\ \hline
$2.6$  & $0.2731604235$ & $0.3689208685$ & $0.325$                         \\ \hline
$2.8$  & $0.3175642389$ & $0.3711748270$ & $0.35$                          \\ \hline
$2.87$ & $0.3425328478$ & $0.3625667941$ & $0.35875$                       \\ \hline
\end{tabular}
\end{table}

As we can see from the table $A/8\leq \lambda_{\max}(A)$, so it is the intervals $(\lambda_{\min}(A),A/8)$ that we really use.

\begin{align*}
P^{(4)}_{0.2}&=0.7846422000\cdot 10^{-7}+0.9999797421x-0.4991602677x^2\\
&+0.3209653251x^3-0.1724127778x^4\\
P^{(4)}_{0.5}&=0.000004233531000+0.9995443916x-0.4920792546x^2\\
&+0.2833215022x^3-0.1073748605x^4\\
P^{(4)}_{0.9}&=0.00004150761800+0.9974003648x-0.4734793761x^2\\\
&+0.2322928691x^3-0.06357668676x^4\\
P^{(4)}_{1.4}&=0.0001918409080+0.9918765007x-0.4434449266x^2\\\
&+0.1814493104x^3-0.03703844684x^4\\
P^{(4)}_{1.8}&=0.0004241866100+0.9855259927x-0.4182177037x^2\\&+0.1506903343x^3-0.02562265780x^4\\
P^{(4)}_{2.3}&=0.00087040785+0.9758164899x-0.3877699674x^2\\
&+0.1214998746x^3-0.01712337080x^4\\
    P^{(4)}_{2.6}&=0.00122184600+0.9693068092x-0.3705662707x^2\\
&+0.1076897348x^3-0.01377390366x^4\\
P^{(4)}_{2.8}&=0.001490122300+0.9647567731x-0.3596083119x^2\\
&+0.09969352541x^3-0.01201389471x^4\\
P^{(4)}_{2.87}&=0.00159028909+0.9631313107x-0.3558721070x^2\\
&+0.09709474935x^3-0.01146912787x^4
\end{align*}

\newpage

\section{Covers and  necklaces} \label{covers and necklaces}

In this section we study special covers of graphs. Our goal is to show that for odd $d$ there are graphs that beats $K_{d+1}$ for large $\lambda$.

\begin{Def} A $k$-cover (or $k$-lift) $H$ of a graph $G$ is defined as follows. The vertex set of  $H$ is $V(H)=V(G)\times \{0,1,\dots, k-1\}$, and if $(u,v)\in E(G)$, then we choose a perfect matching between the vertices $(u,i)$ and $(v,j)$ for $0\leq i,j\leq k-1$. If $(u,v)\notin E(G)$, then there are no edges between $(u,i)$ and $(v,j)$ for $0\leq i,j\leq k-1$. 
\end{Def}

Among $k$-covers we study special ones that we will call necklace covers.

\begin{Def} Let $G$ be a graph and $(u,v)\in E(G)$, and for a $k\geq 2$ let us consider the following $k$-cover. If $(x,y)\neq (u,v)$ and $(x,y) \in E(G)$, then let us choose the perfect matching $((x,i),(y,i))$ for $i\in  \{0,1,\dots, k-1\}$, and for the edge $(u,v)$ let us choose the perfect matching $((u,i),(v,i+1))$, where $i\in  \{0,1,\dots, k-1\}$ and $(v,k)=(v,0)$. We will denote this graph by $\mathrm{G_{uv}N}_k$, and call it the necklace $k$-cover of the graph $G$ with respect to the edge $(u,v)$.
\end{Def}

The following picture depicts $(K_4)_{uv}N_4$, and explains where the word 'necklace' comes from.

\begin{center}
\includegraphics[scale=0.4]{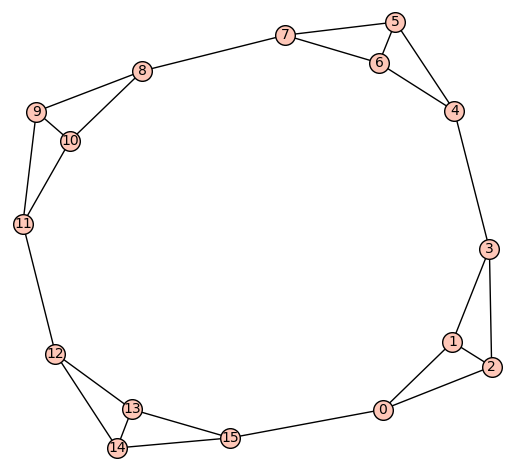}
\end{center}

\begin{Th} \label{transfer_cover} Let $G$ be a graph and $e=(u,v)\in E(G)$. Let $\mathrm{G_{uv}N}_k$ be  the necklace $k$-cover of the graph $G$ with respect to the edge $(u,v)$. Let $A(G_{uv})$ be the following matrix:
$$A(G_{uv}):=\left(\begin{array}{cc} 
M(G-e,\lambda) & \sqrt{\lambda}M(G-u,\lambda) \\
\sqrt{\lambda}M(G-v,\lambda) & \lambda M(G-\{u,v\},\lambda) 
\end{array}
\right).$$
Let $\vartheta_1$ and $\vartheta_2$ be the two eigenvalues of $A$. Then
$$M(\mathrm{G_{uv}N}_k,\lambda)=\vartheta_1^k+\vartheta_2^k.$$
\end{Th}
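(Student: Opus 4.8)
The statement has the flavour of a transfer-matrix identity, so I would set it up as a genuine transfer-matrix computation along the cyclic structure of the necklace $\mathrm{G_{uv}N}_k$. The necklace cover consists of $k$ copies $G^{(0)},\dots,G^{(k-1)}$ of $G$, where copy $G^{(i)}$ is a faithful copy of $G$ except that its ``$u$-$v$ edge'' has been rerouted to connect $u$ in copy $i$ to $v$ in copy $i+1$ (indices mod $k$). So a matching $M$ of $\mathrm{G_{uv}N}_k$, when restricted to the union of the internal (non-rerouted) edges of all copies, looks like a choice of matching in each copy of $G-e$, and the only way different copies interact is through the $k$ ``bridge'' edges $((u,i),(v,i+1))$. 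The natural thing is to record, for each copy $i$, the \emph{boundary state}: whether vertex $u$ of copy $i$ is covered by the bridge edge to its right, and whether vertex $v$ of copy $i$ is covered by the bridge edge from its left. That gives a $2$-state variable per interface, and summing the weight $\lambda^{|M|}$ over all matchings factors as a trace of a product of $k$ identical $2\times 2$ matrices around the cycle.

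**Key steps, in order.**
First I would fix the local weights. For a single copy with its two boundary vertices $u,v$ in prescribed states, the sum of $\lambda^{(\text{internal edges used})}$ over matchings of $G-e$ respecting those states is: $M(G-e,\lambda)$ if neither $u$ nor $v$ is used by a bridge, $M(G-v,\lambda)$ if only $v$ is blocked, $M(G-u,\lambda)$ if only $u$ is blocked, and $M(G-\{u,v\},\lambda)$ if both are blocked. Each bridge edge carries a weight $\lambda$, and I'd split that $\lambda$ as $\sqrt{\lambda}\cdot\sqrt{\lambda}$, half assigned to the copy on each side; this is exactly the symmetrization that produces the matrix $A(G_{uv})$ as written, with the $\sqrt\lambda$ and $\lambda$ entries. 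Second, I would show that $M(\mathrm{G_{uv}N}_k,\lambda)=\operatorname{tr}\bigl(A(G_{uv})^k\bigr)$: index the state of interface $i$ by $s_i\in\{0,1\}$ (bridge edge $((u,i),(v,i+1))$ present or not), observe that the weight of a matching is $\prod_{i=0}^{k-1} A(G_{uv})_{s_i,s_{i+1}}$ once the half-$\lambda$'s and the copy-weights above are matched to the matrix entries, and sum over all state sequences — this is precisely the definition of the trace of the $k$-fold product. Third, since $A(G_{uv})$ is a $2\times 2$ matrix with eigenvalues $\vartheta_1,\vartheta_2$, we get $\operatorname{tr}(A(G_{uv})^k)=\vartheta_1^k+\vartheta_2^k$, which is the claim. (I'd note in passing that $A(G_{uv})$ is similar to a symmetric matrix, so the eigenvalues are real, though that isn't needed for the identity.)

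**Main obstacle.**
The genuine content — and the step I'd be most careful about — is the bookkeeping in the second step: verifying that the matching polynomial of the necklace really does split as the advertised product of matrix entries, with no overcounting and the right powers of $\lambda$. The subtlety is that a bridge edge $((u,i),(v,i+1))$ touches \emph{two} copies, so its weight and its ``blocking'' effect must be apportioned consistently: copy $i$ sees its vertex $u$ blocked, copy $i+1$ sees its vertex $v$ blocked, and the single factor of $\lambda$ must not be double-counted — hence the $\sqrt\lambda$ in each off-diagonal entry and the single $\lambda$ in the $(2,2)$ entry (the case where \emph{both} of a copy's boundary vertices are blocked, contributing one bridge on each side, whose $\sqrt\lambda\cdot\sqrt\lambda$ from that copy's two interfaces reassembles two halves of... — one has to check the indices line up). Once one believes the state sum $\sum_{s_0,\dots,s_{k-1}}\prod_i A(G_{uv})_{s_i,s_{i+1}}$ correctly enumerates matchings of $\mathrm{G_{uv}N}_k$ with the correct weights, the rest is the one-line trace/eigenvalue identity. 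A clean way to make the bookkeeping airtight is to derive it from the standard deletion recursion $M(G,\lambda)=M(G-e,\lambda)+\lambda M(G-\{x,y\},\lambda)$ applied to the $k$ bridge edges one at a time, which mechanically generates the $2^k$ terms indexed by which bridges are used, each of which then factors over the copies.
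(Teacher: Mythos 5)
Your proposal is correct and is essentially the same argument as the paper's: both classify matchings of the necklace by the $0/1$ pattern of which lifted bridge edges are used, split each bridge weight as $\sqrt{\lambda}\cdot\sqrt{\lambda}$ between the two adjacent copies so that each copy contributes exactly one entry of $A(G_{uv})$, and identify the resulting state sum with $\mathrm{Tr}\,A(G_{uv})^k=\vartheta_1^k+\vartheta_2^k$. The bookkeeping subtlety you flag is precisely the one the paper addresses, and your suggested fallback via the deletion recursion on the $k$ bridge edges is a valid way to make it airtight.
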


\begin{proof} For sake of simplicity let the rows and columns of $A(G_{uv})$ be indexed by $0$ and $1$, that is, $A(G_{uv})=\left(\begin{array}{cc} 
a_{00} & a_{01} \\
a_{10} & a_{11} \\ 
\end{array}\right)$.
Let $e_1,e_2,\dots, e_k$ be the lifts of the edge $(u,v)$. For a matching $M\in \mathcal{M}(G)$ let $\chi_M$ be its characteristic vector, that is, 
$$\chi_M(e)=\left\{ \begin{array}{ll} 1 & \mbox{if}\ e\in M, \\ 0 & \mbox{if}\ e\notin M. \end{array} \right. $$ 
For a sequence $(s_1,s_2,\dots ,s_k)\in \{0,1\}^k$, let us consider those matchings $M$ such that $\chi_M(e_j)=s_j$.  Then the contribution of these matchings to $M(\mathrm{G_{uv}N}_k,\lambda)$ is exactly $a_{s_1,s_2}a_{s_2,s_3}\dots a_{s_k,s_1}$. The reason is the following: suppose for instance that $s_1=0$ and $s_2=1$, then from the first copy of $G-e$ the vertex $u$ is covered by the edge $e_2$, but otherwise we can choose any matching of $G-u$. The $\sqrt{\lambda}$ factor comes from taking the half weight of $e_2$ into account, the other half will come from $a_{s2,s3}$. The same reasoning applies for other values of $s_1$ and $s_2$. Hence
$$M(\mathrm{G_{uv}N}_k,\lambda)=\sum_{\underline{s}\in \{0,1\}^k}a_{s_1,s_2}a_{s_2,s_3}\dots a_{s_k,s_1}.$$
Clearly,
$$\sum_{\underline{s}\in \{0,1\}^k}a_{s_1,s_2}a_{s_2,s_3}\dots a_{s_k,s_1}=\mathrm{Tr}A(G_{uv})^k=\vartheta_1^k+\vartheta_2^k.$$
\end{proof}

\begin{Cor} \label{cor_transfer_cover} Let $\lambda\geq 0$. Let $G$ be a graph and $e=(u,v)\in E(G)$. Let $\mathrm{G_{uv}N}_k$ be  the necklace $k$-cover of the graph $G$ with respect to the edge $(u,v)$. Let
$$d(G_{uv})=M(G-e,\lambda)M(G-\{u,v\},\lambda)-M(G-u,\lambda)M(G-v,\lambda).$$
Then

\noindent (a) For  $k\geq 2$ we have $M(\mathrm{G_{uv}N}_k,\lambda)>M(G,\lambda)^k$ if $d(G_{uv})<0$.
\medskip

\noindent (b) For  $k\geq 2$ we have $M(\mathrm{G_{uv}N}_k,\lambda)=M(G,\lambda)^k$ if $d(G_{uv})=0$.
\medskip

\noindent (c) For  $k\geq 2$ we have $M(\mathrm{G_{uv}N}_k,\lambda)<M(G,\lambda)^k$ if $d(G_{uv})>0$.
\end{Cor}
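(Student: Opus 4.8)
The plan is to derive the corollary directly from Theorem~\ref{transfer_cover}, by analyzing how the sum of $k$-th powers of the eigenvalues $\vartheta_1,\vartheta_2$ of $A(G_{uv})$ compares with $M(G,\lambda)^k$. First I would record the two symmetric functions of the eigenvalues: the trace is
$$\vartheta_1+\vartheta_2=\mathrm{Tr}\,A(G_{uv})=M(G-e,\lambda)+\lambda M(G-\{u,v\},\lambda),$$
and the determinant is
$$\vartheta_1\vartheta_2=\det A(G_{uv})=\lambda\left(M(G-e,\lambda)M(G-\{u,v\},\lambda)-M(G-u,\lambda)M(G-v,\lambda)\right)=\lambda\, d(G_{uv}).$$
The key observation is that the trace is exactly $M(G,\lambda)$: this is the standard edge-deletion recursion for the matching generating polynomial, $M(G,\lambda)=M(G-e,\lambda)+\lambda M(G-\{u,v\},\lambda)$, obtained by splitting matchings of $G$ according to whether they use $e$. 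Here I would also note that since all $m_k\ge 0$ and $\lambda\ge 0$, we have $M(G,\lambda)>0$, and $M(G-e,\lambda),M(G-\{u,v\},\lambda)\ge 0$, so $\vartheta_1+\vartheta_2>0$.

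Now set $t=\vartheta_1+\vartheta_2=M(G,\lambda)>0$ and $p=\vartheta_1\vartheta_2=\lambda\,d(G_{uv})$. Theorem~\ref{transfer_cover} gives $M(\mathrm{G_{uv}N}_k,\lambda)=\vartheta_1^k+\vartheta_2^k=p_k$, the Newton power sum, which satisfies the recursion $p_k=t\,p_{k-1}-p\,p_{k-2}$ with $p_0=2$, $p_1=t$. We want to compare $p_k$ with $t^k=(\vartheta_1+\vartheta_2)^k$. I would do this by a direct induction on $k\ge 2$, or alternatively by the clean identity
$$\vartheta_1^k+\vartheta_2^k-(\vartheta_1+\vartheta_2)^k=-\sum_{j=1}^{k-1}\binom{k}{j}\vartheta_1^j\vartheta_2^{k-j},$$
which expresses the difference as $-$ (a sum of terms each divisible by $p=\vartheta_1\vartheta_2$). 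The three cases of the corollary then correspond to the sign of $p=\lambda\,d(G_{uv})$ (recalling $\lambda\ge 0$; the case $\lambda=0$ is trivial since both sides equal $1$, so assume $\lambda>0$): if $d(G_{uv})=0$ then $p=0$, every term on the right vanishes, and $p_k=t^k$, giving (b); if $d(G_{uv})>0$ then $p>0$, and I claim the right-hand side is strictly negative, giving (c); if $d(G_{uv})<0$ then $p<0$ and the right-hand side is strictly positive, giving (a).

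The one genuine obstacle is justifying that the right-hand side has the claimed sign — this is not immediate from the binomial identity alone, because when $p<0$ the eigenvalues $\vartheta_1,\vartheta_2$ are real with opposite signs, but the individual monomials $\vartheta_1^j\vartheta_2^{k-j}$ alternate in sign. The cleanest fix is to group the binomial identity into consecutive pairs, $\binom{k}{j}\vartheta_1^j\vartheta_2^{k-j}+\binom{k}{j+1}\vartheta_1^{j+1}\vartheta_2^{k-j-1}$, factor out $\vartheta_1^j\vartheta_2^{k-j-1}$, and check the resulting sign; but I think the smoothest route is actually to avoid the identity and prove the three statements simultaneously by induction on $k$ using $p_k=t p_{k-1}-p\,p_{k-2}$, together with the auxiliary inequality $p_k>0$ (true since $\vartheta_1+\vartheta_2>0$ and, when $p\le 0$, both eigenvalues contribute nonnegatively in even powers while in odd powers the positive eigenvalue dominates in absolute value; when $p>0$ both eigenvalues are positive). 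For $p>0$, the discriminant $t^2-4p$ could a priori be negative, making $\vartheta_1,\vartheta_2$ complex conjugates; one should check this does not happen, or simply note the argument via $\vartheta_1^k+\vartheta_2^k-t^k=-\sum\binom{k}{j}(\vartheta_1\vartheta_2)^{\min(j,k-j)}(\cdots)$ works formally over $\mathbb{C}$ since the final expression is a polynomial in $t$ and $p$ with the right sign for $p>0$, $t>0$. I would present the induction proof, as it sidesteps all reality/positivity subtleties about the individual $\vartheta_i$ and uses only $t>0$ and the sign of $p$.
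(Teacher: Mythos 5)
Your proposal is essentially the paper's own proof: both rest on Theorem~\ref{transfer_cover} together with the computation $\vartheta_1+\vartheta_2=\mathrm{Tr}\,A(G_{uv})=M(G,\lambda)$ (via the edge recursion) and $\vartheta_1\vartheta_2=\det A(G_{uv})=\lambda\, d(G_{uv})$, and then compare $\vartheta_1^k+\vartheta_2^k$ with $(\vartheta_1+\vartheta_2)^k$. The paper finishes by observing that the eigenvalues are real (the discriminant of a $2\times 2$ matrix with nonnegative entries is $(a_{00}-a_{11})^2+4a_{01}a_{10}\ge 0$) and reading off the sign of $\vartheta_2$ in each case, while you run an induction on the Newton recursion $p_k=tp_{k-1}-p\,p_{k-2}$; these are interchangeable finishes. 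One point needs tightening: your closing claim that the induction ``uses only $t>0$ and the sign of $p$'' is not accurate in case (c). With $p>0$ the step $p_k=tp_{k-1}-p\,p_{k-2}<t^k$ requires $p_{k-2}>0$, and already $p_2=t^2-2p$ is not positive for arbitrary $t,p>0$; likewise your ``formally over $\mathbb{C}$'' fallback is false, since $t^4-p_4=4pt^2-2p^2$ is negative when $p>2t^2$. So you genuinely need the reality of $\vartheta_1,\vartheta_2$ (equivalently $t^2\ge 4p$, whence both eigenvalues are positive when $t,p>0$), which you flagged as ``one should check'' but did not check. It is a one-line verification from the nonnegativity of the off-diagonal entries $\sqrt{\lambda}\,M(G-u,\lambda)$ and $\sqrt{\lambda}\,M(G-v,\lambda)$ for $\lambda\ge 0$; with that line inserted your argument is complete.
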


\begin{proof} Let $\vartheta_1$ and $\vartheta_2$ be the  eigenvalues of $A(G_{uv})$. These are real numbers since the eigenvalues of a $2\times 2$ matrix with non-negative real entries are always real. Note that
$$\vartheta_1+\vartheta_2=\mathrm{Tr}(A(G_{uv}))=M(G-e,\lambda)+\lambda M(G-\{u,v\},\lambda)=M(G,\lambda).$$
This is positive, so if $\vartheta_1$ is the largest eigenvalue, then $\vartheta_1>0$. 
Furthermore, we have $\vartheta_1\vartheta_2=\det(A(G_{uv}))=\lambda d(G_{uv})$. 
Now if $d(G_{uv})<0$, then $\vartheta_2<0$, and 
$$M(\mathrm{G_{uv}N}_k,\lambda)=\vartheta_1^k+\vartheta_2^k=(M(G,\lambda)-\vartheta_2)^k-\vartheta_2^k>M(G,\lambda)^k.$$
If $d(G_{uv})=0$, then $\vartheta_2=0$, and
$$M(\mathrm{G_{uv}N}_k,\lambda)=\vartheta_1^k+\vartheta_2^k=\vartheta_1^k=M(G,\lambda)^k.$$
If $d(G_{uv})>0$, then $\vartheta_2>0$, and
$$M(\mathrm{G_{uv}N}_k,\lambda)=\vartheta_1^k+\vartheta_2^k<(\vartheta_1+\vartheta_2)^k=M(G,\lambda)^k.$$

\end{proof}

Diamond is the unique simple graph with $4$ vertices and $5$ edges. A diamond necklace consisting of $k$ diamonds is the graph obtained from $k$ diamonds by connecting the degree $2$ vertices cyclically to make it a $3$--regular graph. Let us denote it by $\mathrm{DN}_k$ instead of $(K_4)_{uv}N_k$. The following is now an easy corollary of our previous discussion.

\begin{Th}
(a) For  $k\geq 2$ we have 
$$M(K_4,\lambda)^k< M(\mathrm{DN}_k,\lambda)$$
for $\lambda<1$.\\
(b) For  $k\geq 2$ we have 
$$M(K_4,\lambda)^k> M(\mathrm{DN}_k,\lambda)$$
for $\lambda>1$.\\
(c) For  $k\geq 2$ we have 
$$M(K_4,1)^k= M(\mathrm{DN}_k,1).$$
\end{Th}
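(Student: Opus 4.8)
The plan is to reduce everything to Corollary~\ref{cor_transfer_cover} applied with $G=K_4$ and $e=(u,v)$ an arbitrary edge of $K_4$, so that $\mathrm{DN}_k=(K_4)_{uv}N_k$ and the sign of $M(\mathrm{DN}_k,\lambda)-M(K_4,\lambda)^k$ is governed entirely by the sign of
$$d((K_4)_{uv})=M(K_4-e,\lambda)M(K_4-\{u,v\},\lambda)-M(K_4-u,\lambda)M(K_4-v,\lambda).$$
So the whole theorem follows once we show $d((K_4)_{uv})<0$ for $\lambda<1$, $=0$ at $\lambda=1$, and $>0$ for $\lambda>1$ (for $\lambda\ge 0$; the statement is only claimed for $\lambda$ in the relevant ranges, and $\lambda\ge 0$ throughout the necklace discussion). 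The first step, then, is simply to compute the four small matching polynomials explicitly.

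This is entirely routine: $K_4-u\cong K_4-v\cong K_3$, so $M(K_4-u,\lambda)=M(K_3,\lambda)=1+3\lambda$; $K_4-\{u,v\}\cong K_2$ (a single edge), so $M(K_4-\{u,v\},\lambda)=1+\lambda$; and $K_4-e$ is the path $P_4$ on $4$ vertices, with $m_0=1$, $m_1=3$, $m_2=1$, hence $M(K_4-e,\lambda)=1+3\lambda+\lambda^2$. Plugging in,
$$d((K_4)_{uv})=(1+3\lambda+\lambda^2)(1+\lambda)-(1+3\lambda)^2.$$
Expanding the first product gives $1+4\lambda+4\lambda^2+\lambda^3$ and the second gives $1+6\lambda+9\lambda^2$, so
$$d((K_4)_{uv})=\lambda^3-5\lambda^2-2\lambda=\lambda(\lambda^2-5\lambda-2).$$
Hmm — I should double-check this, because the clean factor $(\lambda-1)$ predicted by part (c) of the theorem must appear. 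Re-examining: $K_4-e$ is \emph{not} $P_4$; deleting one edge from $K_4$ leaves the diamond $D$ minus... no: $K_4$ has $6$ edges, removing one leaves $5$ edges on $4$ vertices, which is exactly the diamond $D$. So $M(K_4-e,\lambda)=M(D,\lambda)=1+5\lambda+2\lambda^2$ (two perfect matchings in $D$). Then
$$d((K_4)_{uv})=(1+5\lambda+2\lambda^2)(1+\lambda)-(1+3\lambda)^2=(1+6\lambda+7\lambda^2+2\lambda^3)-(1+6\lambda+9\lambda^2)=2\lambda^3-2\lambda^2=2\lambda^2(\lambda-1).$$
That is the desired form. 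So for $\lambda\in(0,1)$ we have $d((K_4)_{uv})<0$, at $\lambda=1$ we have $d((K_4)_{uv})=0$, and for $\lambda>1$ we have $d((K_4)_{uv})>0$; also note $d((K_4)_{uv})$ does not depend on the choice of edge $e$, by the vertex-transitivity of $K_4$.

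The proof then concludes by invoking Corollary~\ref{cor_transfer_cover}: part (a) of the corollary with $d((K_4)_{uv})<0$ gives $M(\mathrm{DN}_k,\lambda)>M(K_4,\lambda)^k$ for $0<\lambda<1$ (and the $\lambda=0$ case is trivial, both sides being $1$), proving part (a); part (c) with $d((K_4)_{uv})>0$ gives part (b); and part (b) of the corollary with $d((K_4)_{uv})=0$ at $\lambda=1$ gives part (c). There is essentially no obstacle here — the only thing to be careful about is correctly identifying $K_4-e$ as the diamond $D$ (five edges, two perfect matchings) rather than a path, since getting that wrong destroys the clean factorization $2\lambda^2(\lambda-1)$; once the four polynomials are correct, the result is immediate from the corollary already proved.
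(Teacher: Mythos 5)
Your proposal is correct and is essentially identical to the paper's proof: both apply Corollary~\ref{cor_transfer_cover} to $G=K_4$ and reduce everything to the sign of $d((K_4)_{uv})$, which factors as $2\lambda^2(\lambda-1)$ (the paper's displayed computation carries spurious extra factors of $\lambda^2$, but your version matches the definition of $d(G_{uv})$ in the corollary and the conclusion — sign change exactly at $\lambda=1$ — is the same). Your self-correction identifying $K_4-e$ as the diamond rather than $P_4$ lands on the right polynomials, so nothing is missing.
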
 

\begin{proof} This is a special case of Corollary~\ref{cor_transfer_cover}. Note that
$$d((K_4)_{uv})=(1+5\lambda+2\lambda^2)\lambda^2(1+\lambda)-\lambda^2(1+3\lambda)^2=2(\lambda^4-\lambda^3).$$
Hence the sign of $d((K_4)_{uv})$ changes exactly at $\lambda=1$. 
\end{proof}

In general let $c_d$ be the smallest positive root (if it exists) of the polynomial 
$$M(K_{d+1}-e,\lambda)M(K_{d+1}-\{u,v\},\lambda)-M(K_{d+1}-u,\lambda)M(GK_{d+1}-v,\lambda),$$
where $e=(u,v)$ is some edge of $K_{d+1}$. This is, of course,
$$P_d(\lambda):=M(K_{d+1}-e,\lambda)M(K_{d-1},\lambda)-M(K_{d},\lambda)^2.$$
We will actually show that if $d$ is odd, then $P_d(\lambda)$ has exactly one positive root, and if $d$ is even, then it has no positive root. In fact, we will show that $P_d(\lambda)$ has a very special form: if $d$ is even, then all of its coefficients are negative, if $d$ is odd, then all except the top coefficient is negative.

\begin{longtable}{|c|c|}
\hline
$d$ & $P_d(\lambda)$                                                                                                   \\ \hline
\endfirsthead
\endhead
$3$ & $-2\lambda^2(1-\lambda)$                                                                                         \\ \hline
$4$ & $-3\lambda^2(1+3\lambda^2)$                                                                                      \\ \hline
$5$ & $-4\lambda^2(1+3\lambda+9\lambda^2-9\lambda^3)$                                                                  \\ \hline
$6$ & $-5\lambda^2(1+8\lambda+30\lambda^2+45\lambda^4)$                                                                \\ \hline
$7$ & $-6\lambda^2(1+15\lambda+90\lambda^2+150\lambda^3+225\lambda^4-225\lambda^5)$                                    \\ \hline
$8$ & $-7\lambda^2(1+24\lambda+225\lambda^2+840\lambda^3+1575\lambda^4+1575\lambda^6)$                                 \\ \hline
$9$ & $-8\lambda^2(1+35\lambda+483\lambda^2+3045\lambda^3+9555\lambda^4+11025\lambda^5+11025\lambda^6-11025\lambda^7)$ \\ \hline
\end{longtable}

In what follows we will frequently use the recursion
\begin{align} \label{recursion}
M(G,\lambda)=M(G-u,\lambda)+\lambda \sum_{v\in N_G(u)}M(G-\{u,v\},\lambda)
\end{align}
that simply relies on the fact that a matching either does not cover the vertex $u$, or it covers $u$ together with one of its neighbors. Applying it to $M(K_{n+1}-e,\lambda)$ and one of the end vertex of $e$ we get that
$$M(K_{n+1}-e,\lambda)=M(K_n,\lambda)+(n-1)\lambda M(K_{n-1},\lambda).$$
From now on we use the notation $q_n:=M(K_n,\lambda)$. Note that it has an explicit form:
$$q_n=M(K_n,\lambda)=\sum_{0\leq r\leq n/2}\frac{n!}{2^rr!(n-2r)!}\lambda^r,$$
but we will use this explicit form only once. Instead we mostly use the recursion
$$q_n=q_{n-1}+(n-1)\lambda q_{n-2}$$
that is a direct consequence of the identity~\ref{recursion}.

\begin{Lemma}
Let $Q_d(\lambda)=q_dq_{d-2}-q_{d-1}^2$. \\
(a) Then $P_d(\lambda)=-(d-1)\lambda Q_d(\lambda)$.\\
(b) $Q_d(\lambda)$ satisfies the following two recursions. If $d\geq 5$, then
$$Q_d(\lambda)=\lambda q_{d-2}(q_{d-3}-\lambda q_{d-4})+(d-2)^2\lambda^2Q_{d-2}(\lambda).$$
$$Q_d(\lambda)=\lambda q_{d-3}(q_{d-2}-\lambda q_{d-3})+(d-1)(d-3)\lambda^2Q_{d-2}(\lambda).$$
(c) If $d$ is odd, then all coefficients except the top coefficient of $Q_d(\lambda)$ is positive. The top coefficient is negative. If $d$ is even, then all coefficients of $Q_d(\lambda)$ is positive.\\
(d) If $d$ is even, then $Q_d(\lambda)$ has no positive root. If $d$ is odd, then $Q_d(\lambda)$ has a unique positive root $c_d$. We have $c_3<c_5<c_7<\dots $ and $\lim_{d\to \infty}c_d=\infty$.
\end{Lemma}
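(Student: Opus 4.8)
The plan is to derive everything from the single recursion $q_n=q_{n-1}+(n-1)\lambda q_{n-2}$, supplemented at two points by the explicit coefficients $[\lambda^r]q_n=\frac{n!}{2^rr!(n-2r)!}$. Part (a) is immediate: inserting $M(K_{d+1}-e,\lambda)=q_d+(d-1)\lambda q_{d-1}$ into the definition of $P_d$ gives $P_d=q_d(q_{d-1}-q_d)+(d-1)\lambda q_{d-1}^2$, and substituting $q_{d-1}-q_d=-(d-1)\lambda q_{d-2}$ turns this into $-(d-1)\lambda(q_dq_{d-2}-q_{d-1}^2)$. For part (b) I would first apply the recursion to each of $q_d$, $q_{d-1}$, $q_{d-2}$ inside $q_dq_{d-2}-q_{d-1}^2$ and collect terms into the ``normal form''
$$Q_d=\lambda q_{d-2}q_{d-3}+(d-1)(d-3)\lambda^2q_{d-2}q_{d-4}-(d-2)^2\lambda^2q_{d-3}^2 .$$
Both displayed recursions are then regroupings of this single identity, using $Q_{d-2}=q_{d-2}q_{d-4}-q_{d-3}^2$ and the numerical identity $(d-2)^2=1+(d-1)(d-3)$: peeling off $(d-2)^2\lambda^2Q_{d-2}$ leaves exactly $\lambda q_{d-2}(q_{d-3}-\lambda q_{d-4})$, and peeling off $(d-1)(d-3)\lambda^2Q_{d-2}$ leaves exactly $\lambda q_{d-3}(q_{d-2}-\lambda q_{d-3})$.

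For part (c), I would first isolate an auxiliary sign pattern: the coefficients of $q_{n-2}(\lambda)-\lambda q_{n-3}(\lambda)$ are nonnegative when $n$ is even, and nonnegative except for a strictly negative leading one when $n$ is odd. This can be read off from the explicit formula, since for $r\ge1$ the coefficient $[\lambda^r]\bigl(q_{n-2}-\lambda q_{n-3}\bigr)$ equals a positive multiple of $\frac{(n-2)(n-2r-1)}{2r}-1$, which is positive precisely for $r<(n-2)/2$ (the constant term being $1$); alternatively one can exhibit an injection from size-$r$ matchings of $K_{n-3}$ into size-$(r{+}1)$ matchings of $K_{n-2}$. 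The normal form shows $Q_d=\lambda\widehat Q_d$ with $\widehat Q_d(0)=1$, so it suffices to prove the claimed pattern for $\widehat Q_d$, which I would do by induction in steps of $2$ (explicit base cases $\widehat Q_2=1$, $\widehat Q_3=1-\lambda$, $\widehat Q_4=1+3\lambda^2$). For odd $d\ge5$ the first recursion of (b) reads $\widehat Q_d=q_{d-2}(q_{d-3}-\lambda q_{d-4})+(d-2)^2\lambda^2\widehat Q_{d-2}$; here $q_{d-3}-\lambda q_{d-4}$ has nonnegative coefficients ($d-1$ even), so the first summand contributes nonnegative coefficients in degrees $0,\dots,d-4$, while by induction $(d-2)^2\lambda^2\widehat Q_{d-2}$ contributes positive coefficients in degrees $2,\dots,d-3$ and the strictly negative leading coefficient in degree $d-2$; summing and tracking the degrees gives the pattern. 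For even $d\ge6$ the second recursion $\widehat Q_d=q_{d-3}(q_{d-2}-\lambda q_{d-3})+(d-1)(d-3)\lambda^2\widehat Q_{d-2}$ does the same job, now with $q_{d-2}-\lambda q_{d-3}$ having nonnegative coefficients ($d$ even).

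For part (d): when $d$ is even, $Q_d=\lambda\widehat Q_d>0$ for $\lambda>0$ by (c), so there is no positive root. When $d$ is odd, the coefficient sequence of $\widehat Q_d$ has exactly one sign change, so by Descartes' rule of signs $\widehat Q_d$ has exactly one positive root $c_d$; since $\widehat Q_d(0)=1$ it is positive on $[0,c_d)$ and negative beyond $c_d$. To see $c_d<c_{d+2}$, evaluate the first recursion of (b) for $\widehat Q_{d+2}=q_d(q_{d-1}-\lambda q_{d-2})+d^2\lambda^2\widehat Q_d$ at $\lambda=c_d$: the last term vanishes, $q_d(c_d)>0$, and $q_{d-1}-\lambda q_{d-2}$ has nonnegative coefficients ($d+1$ even) hence is positive at $c_d>0$, so $\widehat Q_{d+2}(c_d)>0$ and therefore $c_d<c_{d+2}$. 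Finally, $c_d\to\infty$ would follow by computing the top three coefficients of $Q_d$ from the explicit formula (equivalently, from the normal form together with the leading coefficients of the $q_n$): with $P=(d-2)!!$ they are $-P^2$, $P^2$, $\tfrac{d-3}{6}P^2$ in degrees $d-1$, $d-2$, $d-3$; since all lower coefficients of $Q_d$ are nonnegative, $\widehat Q_d(c_d)=0$ forces $P^2c_d^{\,2}\ge P^2c_d+\tfrac{d-3}{6}P^2$, whence $c_d>\sqrt{(d-3)/6}$.

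I expect the main obstacle to lie in part (c): getting the auxiliary sign pattern for $q_{n-2}-\lambda q_{n-3}$ cleanly, and then carrying the two-step induction so that the strictly negative leading coefficient of $\widehat Q_{d-2}$ is correctly identified as the leading coefficient of $\widehat Q_d$ (and nothing below it changes sign); the three-coefficient computation underpinning $c_d\to\infty$ in (d) is the other place where some care is needed.
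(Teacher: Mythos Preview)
Your proposal is correct and follows essentially the same route as the paper's proof: the same reduction in (a), the same ``normal form'' identity for $Q_d$ from which both recursions in (b) are read off, the same auxiliary sign fact about $q_m-\lambda q_{m-1}$ for even $m$ feeding the same two-step induction in (c), and the same evaluation at $c_{d-2}$ together with the top-coefficient computation for (d). The only cosmetic differences are your use of Descartes' rule of signs (the paper gives a short direct argument for uniqueness) and your keeping three top coefficients rather than two in the final bound; both yield $c_d\ge\sqrt{(d-3)/6}$.
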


\begin{proof}
(a) We have
\begin{align*}
P_d(\lambda)&=M(K_{d+1}-e,\lambda)M(K_{d-1},\lambda)-M(K_d,\lambda)^2\\
&=(q_d+(d-1)\lambda q_{d-1})q_{d-1}-q_d^2\\
&=(q_d+(d-1)\lambda q_{d-1})q_{d-1}-q_d(q_{d-1}+(d-1)\lambda q_{d-2})\\
&=-(d-1)\lambda (q_dq_{d-2}-q_{d-1}^2)\\
&=-(d-1)\lambda Q_d(\lambda).
\end{align*}
(b) We have
\begin{align*}
Q_d(\lambda)&=q_dq_{d-2}-q_{d-1}^2\\
&=(q_{d-1}+(d-1)\lambda q_{d-2})q_{d-2}-q_{d-1}(q_{d-2}+(d-2)\lambda q_{d-3})\\
&=(d-1)\lambda q_{d-2}^2-(d-2)\lambda q_{d-1}q_{d-3}\\
&=(d-1)\lambda q_{d-2}(q_{d-3}+(d-3)\lambda q_{d-4})-(d-2)\lambda (q_{d-2}+(d-2)\lambda q_{d-3})q_{d-3}\\
&=\lambda q_{d-2}q_{d-3}+(d-1)(d-3)\lambda^2q_{d-2}q_{d-4}-(d-2)^2\lambda^2q_{d-3}^2\\
&=\lambda q_{d-2}(q_{d-3}-\lambda q_{d-4})+(d-2)^2\lambda^2(q_{d-2}q_{d-4}-q_{d-3}^2)\\
&=q_{d-2}(q_{d-3}-\lambda q_{d-4})+(d-2)^2\lambda^2Q_{d-2}(\lambda).
\end{align*}
Alternatively, we can write the last two lines as follows:
\begin{align*}
Q_d(\lambda)&=\lambda q_{d-2}q_{d-3}+(d-1)(d-3)\lambda^2q_{d-2}q_{d-4}-(d-2)^2\lambda^2q_{d-3}^2\\
&=\lambda (q_{d-2}-\lambda q_{d-3})q_{d-3}+(d-1)(d-3)\lambda^2(q_{d-2}q_{d-4}-q_{d-3}^2)\\
&=\lambda q_{d-3}(q_{d-2}-\lambda q_{d-3})+(d-1)(d-3)\lambda^2Q_{d-2}(\lambda).
\end{align*}
(c) First we prove that if $n$ is even, then $q_n-\lambda q_{n-1}$ is a polynomial of degree $n/2-1$ with only positive coefficients. Indeed, by the explicit form of $q_n$ we get that the coefficient of $\lambda^r$ is $q_n-\lambda q_{n-1}$ is
$$\frac{n!}{2^rr!(n-2r)!}-\frac{(n-1)!}{2^{r-1}(r-1)!(n-2r+1)!}=\frac{(n-1)!}{2^{r-1}(r-1)!(n-2r)!}\left(\frac{n}{2r}-\frac{1}{n-2r+1}\right).$$
If $r=n/2$, then this is $0$. If $r<n/2$ this is positive since $n/(2r)>1>\frac{1}{n-2r+1}$.

Now we are ready to prove the statements of part (c). We prove them by induction. We can assume that $d\geq 5$ since the table shows that the statement is true for $d\leq 4$. First assume that $d$ is even. Then we use the recursion formula 
$$Q_d(\lambda)=\lambda q_{d-3}(q_{d-2}-\lambda q_{d-3})+(d-1)(d-3)\lambda^2Q_{d-2}(\lambda).$$
By induction $Q_{d-2}(\lambda)$ has only positive coefficients. Since $d-2$ is even we get that $q_{d-2}-\lambda q_{d-3}$ has also only positive coefficients. So in this case we are done.
If $d$ is odd, then we use the recursion formula 
$$Q_d(\lambda)=\lambda q_{d-2}(q_{d-3}-\lambda q_{d-4})+(d-2)^2\lambda^2Q_{d-2}(\lambda).$$
By induction $Q_{d-2}(\lambda)$ has only positive coefficients except the top one which is negative. Since $d-3$ is even we get that $q_{d-3}-\lambda q_{d-4}$ has only positive coefficients. Note that $\deg(q_{d-2})=(d-3)/2$, $\deg(q_{d-3}-\lambda q_{d-4})=(d-5)/2$ while by induction $\deg(Q_{d-2}(\lambda))=d-3$, so $\deg((d-1)(d-3)\lambda^2Q_{d-2}(\lambda))=d-1$ and $\deg(\lambda q_{d-2}(q_{d-3}-\lambda q_{d-4}))=d-3$, thus the top coefficient will not be affected. (We can also see that it is actually \\ $-((d-2)(d-4)\dots 1)^2$.\\
(d) If $d$ is even, then all coefficients of $Q_d(\lambda)$ are positive, so it cannot have a positive real root. If $d$ is odd, then the form of the polynomial immediately implies that it has a real root, since the polynomial is positive for very small $\lambda$ and is negative for very large $\lambda$ since the top coefficient is negative. We also show that such a polynomial cannot have two positive zeros. Suppose for contradiction that a polynomial of the form
$$p(x)=\sum_{j=0}^{r-1}a_jx^j-a_rx^r$$
have two positive roots, $\gamma_1$ and $\gamma_2$ and $a_j\geq 0$ for $j=0,\dots ,r$. We can assume that $\gamma_2>\gamma_1$. Then
$$a_r\gamma_2^r=a_r\gamma_1^r\left(\frac{\gamma_2}{\gamma_1}\right)^r=\left(\sum_{j=0}^{r-1}a_j\gamma_1^j\right)\left(\frac{\gamma_2}{\gamma_1}\right)^r=\sum_{j=0}^{r-1}a_j\gamma_2^j\left(\frac{\gamma_2}{\gamma_1}\right)^{r-j}\geq \frac{\gamma_2}{\gamma_1}\sum_{j=0}^{r-1}a_j\gamma_2^j=\frac{\gamma_2}{\gamma_1}a_r\gamma_2^r.$$
This is, of course, contradiction. This shows that for odd $d$ the polynomial $Q_d(\lambda)$ has exactly one positive root that we can denote by $c_d$. This also shows that for $0<\lambda<c_d$ we get that $Q_d(\lambda)>0$, and for $\lambda>c_d$ we get that $Q_d(\lambda)<0$. Now let us evaluate $Q_d(\lambda)$ at $c_{d-2}$. Then
$$Q_d(c_{d-2})=\lambda q_{d-2}(q_{d-3}-\lambda q_{d-4})\Big|_{\lambda=c_{d-2}}+(d-2)^2\lambda^2Q_{d-2}(c_{d-2})=\lambda q_{d-2}(q_{d-3}-\lambda q_{d-4})\Big|_{\lambda=c_{d-2}}>0$$
since all terms of $\lambda q_{d-2}(q_{d-3}-\lambda q_{d-4})$ are positive. Hence $c_d>c_{d-2}$. To prove that $\lim_{d\to \infty}c_d=\infty$ we need to do a little additional computation. We have seen that it is quite easy to prove that the coefficient of $\lambda^{d-1}$ in $Q_{d}(\lambda)$ is $-((d-2)!!)^2$, where $(d-2)!!=(d-2)(d-4)\dots 3\cdot 1$. From the recursion it is also easy to see that the coefficient of 
$\lambda^{d-2}$ in $Q_{d}(\lambda)$ is $((d-2)!!)^2$. It is a bit harder to see, but one can check even by direct computation is that the coefficient of 
$\lambda^{d-3}$ in $Q_{d}(\lambda)$ is $\frac{d-3}{6}((d-2)!!)^2$. Indeed, if $[\lambda^r]p$ denotes the coefficient of $\lambda^r$ in $p$, then
\begin{align*}
[\lambda^{d-3}]Q_d(\lambda)&=[\lambda^{(d-1)/2}]q_d\cdot [\lambda^{(d-5)/2}]q_{d-2}+[\lambda^{(d-3)/2}]q_d\cdot [\lambda^{(d-3)/2}]q_{d-2}\\
&-2[\lambda^{(d-1)/2}]q_{d-1}\cdot [\lambda^{(d-5)/2}]q_{d-1}
-[\lambda^{(d-3)/2}]q_{d-1}\cdot [\lambda^{(d-3)/2}]q_{d-1}\\
&=\frac{d!}{2^{(d-1)/2}((d-1)/2)!}\cdot \frac{(d-2)!}{2^{(d-5)/2}((d-5)/2)!3!}\\
&+\frac{d!}{2^{(d-3)/2}((d-3)/2)!3!}\cdot \frac{(d-2)!}{2^{(d-3)/2}((d-3)/2)!}\\
&-2\frac{(d-1)!}{2^{(d-1)/2}((d-1)/2)!}\cdot \frac{(d-1)!}{2^{(d-5)/2}((d-5)/2)!4!}\\
&-\frac{(d-1)!}{2^{(d-3)/2}((d-3)/2)!2!}\cdot \frac{(d-1)!}{2^{(d-3)/2}((d-3)/2)!2!}\\
&=\frac{(d-1)!(d-2)!}{2^{d-3}3!((d-5)/2)!((d-1)/2)!}\\
&=\frac{d-3}{6}((d-2)!!)^2
\end{align*}

Then by keeping just the terms corresponding to $\lambda^{d-3}$ and $\lambda^{d-1}$, and forget the remaining positive terms we get that
$$0=Q_d(c_d)\geq \frac{d-3}{6}((d-2)!!)^2c_d^{d-3}-((d-2)!!)^2c_d^{d-1}$$
whence $c_d^2\geq \frac{d-3}{6}$, that is, $c_d\geq \sqrt{\frac{d-3}{6}}$. Hence $\lim_{d\to \infty}c_d=\infty$.
\end{proof}

\begin{table}[h!]
\begin{tabular}{|c|c|} \hline 
 $d$    &  $c_d$  \\ \hline 
 $3$    &   $1$   \\ \hline 
 $5$    &   $1.317124345$   \\ \hline 
 $7$    &   $1.593204592$   \\ \hline 
 $9$    &   $1.844705431$   \\ \hline 
 \end{tabular}
 \end{table}
 
 Let us summarize what happened so far. We introduced the polynomial 
 $$P_d(\lambda)=M(K_{d+1}-e,\lambda)M(K_{d-1},\lambda)-M(K_{d},\lambda)^2,$$
 and we showed that if $d$ is odd, then it has a unique positive root $c_d$. By our previous argument on necklaces we see that if $\lambda<c_d$, then for all $k>1$ we have
 $$\frac{1}{v(K_{d+1})}\ln M(K_{d+1},\lambda)<\frac{1}{v((K_{d+1})_{uv}N_k)}\ln M((K_{d+1})_{uv}N_k,\lambda).$$
 If $\lambda=c_d$, then for all $k$ we have
 $$\frac{1}{v(K_{d+1})}\ln M(K_{d+1},\lambda)=\frac{1}{v((K_{d+1})_{uv}N_k)}\ln M((K_{d+1})_{uv}N_k,\lambda).$$
 If $\lambda>c_d$, then for all $k>1$ we have 
 $$\frac{1}{v(K_{d+1})}\ln M(K_{d+1},\lambda)>\frac{1}{v((K_{d+1})_{uv}N_k)}\ln M((K_{d+1})_{uv}N_k,\lambda).$$
 So for $\lambda>c_d$ the graph $K_{d+1}$ is never the minimizer graph. 
 
\section{The case of even $d$} \label{sec: even d}

In this section we prove Theorem~\ref{even d}. The strategy is very simple: we show that a graph that has no connected component isomorphic to $K_{d+1}$ necessarily have a large 
matching. Note that the largest matching of $K_{d+1}$ has size $d/2$, that is, its size is $\frac{d}{2(d+1)}v(K_{d+1})$.
First, we show that there is a constant  $r_d$ larger than $\frac{d}{2(d+1)}$  such that if $G$ is a $d$--regular graph without connected components isomorphic to $K_{d+1}$, then $G$ has a matching of size at least $r_dv(G)$.

\begin{Lemma}
Let $d$ be even. Suppose that $G$ is a $d$--regular graph without connected component isomorphic to $K_{d+1}$. Then the size of the largest matching is at least $\frac{d+2}{2(d+3)}v(G)$.
\end{Lemma}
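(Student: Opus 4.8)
The natural tool here is the Berge--Tutte formula: for any graph $G$,
$$\nu(G)=\tfrac12\bigl(v(G)-\mathrm{def}(G)\bigr),\qquad \mathrm{def}(G)=\max_{S\subseteq V(G)}\bigl(o(G-S)-|S|\bigr),$$
where $o(H)$ denotes the number of odd components of $H$. So the plan is to show that $o(G-S)-|S|\le \tfrac{1}{d+3}v(G)$ for \emph{every} $S\subseteq V(G)$; this yields $\nu(G)\ge\tfrac12\bigl(1-\tfrac{1}{d+3}\bigr)v(G)=\tfrac{d+2}{2(d+3)}v(G)$ at once.

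Fix $S$, let $C_1,\dots,C_q$ be the odd components of $G-S$, and write $c_i=v(C_i)$ and $b_i$ for the number of edges joining $C_i$ to $S$. Two basic facts will drive the argument. First, summing degrees inside $C_i$ gives $d\,c_i=2e(C_i)+b_i$; in particular $b_i=d\,c_i-2e(C_i)$ is even (since $d$ is even), and $b_i\ge c_i(d+1-c_i)$ because $e(C_i)\le\binom{c_i}{2}$. Second, if $b_i=0$ then $C_i$ receives no edge from outside, so it is a connected component of $G$; being $d$-regular with oddly many vertices and not isomorphic to $K_{d+1}$, it must satisfy $c_i\ge d+3$ (it cannot have $d+1$ vertices, as the only $d$-regular graph on $d+1$ vertices is $K_{d+1}$, and $d+2$ is even).

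The heart of the proof will be the pointwise estimate
$$d\,c_i+(d+4)\,b_i\ \ge\ d(d+3)\qquad(1\le i\le q),$$
which I would verify by cases on $c_i$ (being odd while $d$ is even, $c_i$ is either $\le d-1$, equal to $d+1$, or $\ge d+3$). If $c_i\ge d+3$ it is immediate from $d\,c_i\ge d(d+3)$. If $c_i=d+1$, then $C_i\ne K_{d+1}$ forces $e(C_i)\le\binom{d+1}{2}-1$, hence $b_i\ge 2$, and $d(d+1)+2(d+4)=d^2+3d+8>d(d+3)$. If $c_i\le d-1$, then $b_i\ge c_i(d+1-c_i)\ge 2c_i$, and the quadratic $c\mapsto d\,c+(d+4)c(d+1-c)$ is concave on $[1,d-1]$ and at least $d(d+3)$ at both endpoints (a one-line check at $c=1$ and $c=d-1$), so it is $\ge d(d+3)$ throughout. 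Granting this, summing over $i$ and using $\sum_i c_i\le v(G)-|S|$ (the $C_i$ are vertex-disjoint and avoid $S$) together with $\sum_i b_i\le d|S|$ (each such edge has an endpoint in $S$, and $S$ spans only $d|S|$ edge-ends), we get
$$d(d+3)\,q\ \le\ d\bigl(v(G)-|S|\bigr)+(d+4)\,d|S|\ =\ d\,v(G)+d(d+3)|S|,$$
so $q-|S|\le\tfrac{1}{d+3}v(G)$, as needed.

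The only place that requires genuine care — and where the hypothesis is actually used — is the borderline component size $c_i=d+1$: there the generic bound $b_i\ge c_i(d+1-c_i)$ collapses to $b_i\ge 0$, and one must fall back on the exclusion of $K_{d+1}$ components (equivalently, on $b_i$ being a positive even integer) to recover $b_i\ge 2$. This is precisely the slack that produces the constant $\tfrac{d+2}{2(d+3)}$, strictly larger than the ratio $\tfrac{d}{2(d+1)}$ attained by $K_{d+1}$ itself.
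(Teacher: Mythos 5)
Your proof is correct, and it takes a genuinely different (though closely related) route from the paper. The paper puts the constant fractional matching $x_e\equiv\frac{d+2}{d(d+3)}$ into Edmonds' matching polytope and reads off the bound from $|x|=\frac{d+2}{2(d+3)}v(G)$; you instead bound the Berge--Tutte deficiency directly, showing $o(G-S)-|S|\le\frac{1}{d+3}v(G)$ for all $S$. These are essentially LP-dual versions of the same argument: the odd-set constraints $\sum_{e\in E(S)}x_e\le\frac{|S|-1}{2}$ that the paper verifies correspond exactly to the odd components you count, and indeed both proofs hinge on the identical three-way case split on odd cardinalities $\le d-1$, $=d+1$, $\ge d+3$, with the exclusion of $K_{d+1}$ invoked only in the borderline case $d+1$ (the paper via $e(S)\le\binom{d+1}{2}-1$, you via $b_i\ge 2$, which are the same fact in disguise). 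I checked your estimates: $b_i=dc_i-2e(C_i)$ gives $b_i\ge c_i(d+1-c_i)$; the concave quadratic $f(c)=dc+(d+4)c(d+1-c)$ satisfies $f(1)=d^2+5d\ge d(d+3)$ and $f(d-1)=3d^2+5d-8\ge d(d+3)$ for $d\ge 2$; and the summation $d(d+3)q\le d(v(G)-|S|)+(d+4)d|S|$ closes correctly. What the paper's version buys is brevity once Edmonds' (harder) polytope theorem is granted; what yours buys is that it rests only on the classical Berge--Tutte formula and makes explicit where the slack over $\frac{d}{2(d+1)}$ comes from. Either is a complete proof.
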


The following proof is inspired by the proof of Flaxman and Hoory \cite{flaxman2007maximum}.

\begin{proof}
We will use the description of matching polytope due to Edmonds \cite{edmonds1965maximum}, for a short proof see \cite{schrijver1983short}. The matching polytope is defined as follows: let us consider the vector space $\mathbb{R}^{E(G)}$ and for each matching $M$ let us associate its characteristic  vector:
$$\chi_{M}(e)=\left\{ \begin{array}{cl}
1 & \mbox{if}\ e\in M, \\
0 & \mbox{if}\ e\notin M.
\end{array}\right.
$$
Then the matching polytope of the graph $G$ is simply the convex hull of the vectors $\chi_M$:
$$\mathrm{MP}(G)=\mathrm{conv}\left\{\chi_M\ \ \mbox{for}\ M\in \mathcal{M}(G)\right\}.$$
Edmonds \cite{edmonds1965maximum} proved an alternative characterization of the matching polytope. He proved that $x\in \mathrm{MP}(G)$ if and only if the following conditions hold true:\\
(i) $x_e\geq 0$, that is, the vector is non-negative\\
(ii) for all $v\in V(G)$ we have $\sum_{e: v\in e}x_e\leq 1$\\
(iii) for all set $S\subseteq V(G)$ such that $|S|$ is odd we have
$$\sum_{e\in E(S)}x_e\leq \frac{|S|-1}{2},$$
where $E(S)$ is the set of edges induced by the vertices in $S$.
It is easy to see that the characteristic vectors of the matchings satisfy these conditions, and so their convex combinations. The converse is the more difficult part of this theorem, and this is what we will use. We will show that the vector $x$ that is constant $\frac{d+2}{d(d+3)}$ on every edge satisfies the condition. It is clearly non-negative, and for every vertex $v$ we have
$$\sum_{e: v\in e}x_e=\frac{d(d+2)}{d(d+3)}\leq 1.$$
To check condition (iii) we distinguish three cases: (I) $|S|\leq d-1$, (II) $|S|=d+1$ and (III) $|S|\geq d+3$. In case (I) we have
$$\sum_{e\in E(S)}x_e\leq \binom{|S|}{2}\frac{d+2}{2(d+3)}=
\frac{|S|-1}{2}\frac{(d+2)|S|}{d(d+3)}\leq \frac{|S|-1}{2}\frac{(d+2)(d-1)}{d(d+3)}\leq \frac{|S|-1}{2}.$$
In case (II) we use the condition of the theorem that $G$ has no connected component isomorphic to $K_{d+1}$, so every $d+1$ vertices induces at most $\binom{d+1}{2}-1=\frac{d^2+d-2}{2}$ edges.
$$\sum_{e\in E(S)}x_e=|E(S)|\frac{d+2}{d(d+3)}\leq \frac{d^2+d-2}{2}\cdot \frac{d+2}{d(d+3)}< \frac{d}{2}=\frac{|S|-1}{2}.$$
Here $(d^2+d-2)(d+2)=d^3+3d^2-4<d^3+3d^2$.
In case (III) we simply use the fact that $S$ can only induce at most $d|S|/2$ edges.
$$\sum_{e\in E(S)}x_e=|E(S)|\frac{d+2}{d(d+3)}\leq \frac{d|S|}{2}\cdot \frac{d+2}{d(d+3)}\leq \frac{|S|-1}{2},$$
where the last inequality is true since $ \frac{|S|}{|S|-1}\leq \frac{d+3}{d+2}$ since $|S|\geq d+3$.
Hence the vector $x$ is the matching polytope, that is, we can find matchings $M\in \mathcal{G}$ and non-negative $\alpha_M$ such that $x=\sum \alpha_M\chi_M$ and $\sum \alpha_M=1$.

Now for a $y\in \mathbb{R}^{E(G)}$ let $|y|=\sum_{e\in E(G)}y_e$. Then
$$|x|=\frac{dv(G)}{2}\cdot \frac{d+2}{d(d+3)}=\frac{d+2}{2(d+3)}v(G).$$
On the other hand,
$$|x|=\left|\sum \alpha_M\chi_M\right|=\sum \alpha_M |\chi_M|=\sum \alpha_M|M|\leq \max |M|.$$
Hence there must be a matching of size at least $\frac{d+2}{2(d+3)}v(G)$.

\end{proof}

Now we are ready to prove Theorem~\ref{even d}.

\begin{proof}[Proof of Theorem~\ref{even d}]
We can assume that $\lambda>1$. Then
$$M_{K_{d+1}}(\lambda)\leq \lambda^{d/2}M_{K_{d+1}}(1)\leq (d+1)^{d+1}\lambda^{d/2},$$
where the upper bound $M_G(1)\leq (d+1)^{d+1}$ is very generous, and follows from the fact that at each vertex we can choose at most one edge. On the other hand, if $\nu(G)$ denotes the size of the largest matching, then
$$M_G(\lambda)\geq \lambda^{\nu(G)}\geq \lambda^{\frac{d+2}{2(d+3)}v(G)}$$
by the previous lemma.
Hence
$$\frac{1}{v(K_{d+1})}\ln M_{K_{d+1}}(\lambda)\leq \ln(d+1)+\frac{d}{2(d+1)}\ln (\lambda),$$
while 
$$\frac{1}{v(G)}\ln M_G(\lambda)\geq \frac{d+2}{2(d+3)}\ln (\lambda).$$
Hence if 
$$\frac{d+2}{2(d+3)}\ln (\lambda)\geq \ln(d+1)+\frac{d}{2(d+1)}\ln (\lambda),$$
then we are done. This is satisfied if 
$$\lambda\geq \exp((d+1)(d+3)\ln(d+1)).$$

\end{proof}

\noindent \textbf{Acknowledgment.} The second author thanks Ferenc Bencs and Will Perkins for the discussions on the topic of this paper.

\bibliographystyle{siamnodash}

\bibliography{biblio_matchings_regular_graphs}

\end{document}